\documentclass[11pt]{amsart}
\usepackage{fullpage}
\usepackage{latexsym}
\usepackage{amssymb}
\usepackage{pb-diagram}
\usepackage{amsmath}
\usepackage{amsthm}
\usepackage{mathrsfs}

\newtheorem{theorem}{Theorem}[section]
\newtheorem{definition}[theorem]{Definition}
\newtheorem{prop}[theorem]{Proposition}
\newtheorem{lemma}[theorem]{Lemma}
\newtheorem{remark}[theorem]{Remark}

\newtheorem{ex}[theorem]{Example}

\newtheorem{Cor}[theorem]{Corollary}

\DeclareMathOperator{\Ima}{Im}
\DeclareMathOperator{\image}{im}
\DeclareMathOperator{\diag}{diag}

\DeclareMathOperator{\Is}{Is}

\DeclareMathOperator{\Pic}{Pic}
\DeclareMathOperator{\id}{id}
\DeclareMathOperator{\Endo}{End}
\DeclareMathOperator{\Sp}{Sp}

\DeclareMathOperator{\Aut}{Aut}

\DeclareMathOperator{\Nm}{Nm}
\def\CC{\mathbb{C}}
\def\NN{\mathbb{N}}
\def\PP{\mathbb{P}}
\def\QQ{\mathbb{Q}}

\def\ZZ{\mathbb{Z}}
\def\cA{\mathcal{A}}
\def\cJ{\mathcal{J}}

\def\cO{\mathcal{O}}

\def\To{\longrightarrow}

\def\SH{\mathfrak{h}}

\def\Mapsto{\mapstochar\longrightarrow}
\title{Non-simple principally polarised abelian varieties}
\author{Pawe\l{} Bor\'owka}
\address{Institute of Mathematics, Jagiellonian University, Krakow}
\email{Pawel.Borowka@uj.edu.pl} 
\begin{document}

\begin{abstract}
The paper investigates the locus of non-simple principally polarised abelian $g$-folds. We show that the
irreducible components of this locus are $\Is^g_{D}$,
defined as the locus of principally polarised $g$-folds having an
abelian subvariety with induced polarisation of type
$D=(d_1,\ldots,d_k)$, where $k\leq\frac{g}{2}$. The main theorem produces Humbert-like equations for irreducible components of $\Is^g_{D}$ for any $g$ and $D$.
Moreover,
there are theorems which characterise the Jacobians of curves that
are \'etale double covers or double covers branched in two or four points.
\end{abstract}
\maketitle

\section{Introduction}
A common approach to understand the geometry of the moduli space of abelian varieties
is to use ideas coming from the geometry of curves. That is possible because of the Torelli theorem, which says that the Jacobian completely
characterises the curve. Because of that, many geometric constructions from the theory of curves give rise to
interesting constructions in the theory of Jacobians. 
One remarkable construction is the Prym construction,
which gives a subvariety of a Jacobian for any finite cover of curves.
More precisely, every cover of curves $f:C\longrightarrow C'$ induces a pullback map $f^*:JC'\longrightarrow JC$. Therefore $JC$ is a non-simple abelian variety, as it contains $\image f^*$ and the complementary abelian subvariety,  
called the Prym variety of the cover.

The motivation behind results of this paper is to understand the locus of non-simple abelian varieties itself. One can ask:
\begin{center}
$(\star)$ \textit{What does the locus of non-simple principally polarised abelian $g$-folds look like?} 
\end{center}

For abelian surfaces, a non-simple abelian surface contains an elliptic curve, and Humbert \cite{H} proved that
the locus of non-simple principally polarised abelian surfaces is the union of countably many irreducible
surfaces called Humbert surfaces, in the moduli space. The Humbert surfaces are indexed by the degree of the polarisation restricted to an elliptic curve.

In Section 3, we propose a definition of the generalised Humbert locus, denoted by $\Is^g_{D}$,
which is the locus of principally polarised abelian $g$-folds having an abelian subvariety with induced polarisation of type
$D=(d_1,\ldots,d_k)$, where $k\leq\frac{g}{2}$.
The definition was stated by O. Debarre in \cite[p 259]{Deb}, denoted by $\mathscr{A}^{\delta}_{g',g-g'}$. Then he proves irreducibility of $\Is^g_{D}$, using irreducibility of some moduli space. 
The main result of Section 3 is Proposition \ref{irrofis} that also states that $\Is^g_{D}$ is irreducible. Both ideas of proofs are similar, but the proof presented in this paper is explicit.

Using the fact that every non-simple abelian $g$-fold belongs to some $\Is^g_{D}$, we get that
\textit{the only discrete invariants of the locus of non-simple  principally polarised abelian varieties
are the dimensions of subvarieties and the type of the induced polarisation on the smaller one.}
Moreover, all possibilities of $k\leq\frac{g}{2}$ and $D=(d_1,\ldots,d_k)$ can occur.

The main results of this paper are contained in Section 4, where we find equations of preimage of $\Is^g_{D}$ in the Siegel space $\SH_g$, for all $g>2$.
Because the codimension is bigger than $1$, we could not find symplectic invariants similar to Humbert's discriminant, so Theorem \ref{thmIsgd}
 provides only a set of linear equations of one particular irreducible component of the
preimage in $\SH_g$ of $\Is^g_D$. 

The main result of the paper states that \textit{a principally polarised abelian $g$-fold $(A,H)$ is non-simple, thus containing an abelian subvariety $M\subset A$ of dimension $k\leq\frac{g}{2}$ such that  $H|_M$ is of type $D=(d_1,\ldots,d_k)$, if and only if one can find a period matrix
$Z_A=[z_{ij}]\in \SH_g$ 
satisfying the linear equations
\begin{align*}
&z_{ij}=d_iz_{(g+1-i)j},&\quad &i=1,\ldots,k, j=1,\ldots,k&\\ 
&z_{ij}=0,&\quad &i=k+1,\ldots,g-k, j=1,\ldots,k&
\end{align*}
such that $\Lambda_A=\left<Z_A\ \id_g\right>$ and $A\cong\CC^g/\Lambda_A$.}

Moreover, using results from Section \ref{sec4} one can easily find  many other sets of equations.

The next step of the investigation is to \textit{understand the locus of Jacobians that are non-simple abelian varieties.} In full generality, due to the Schottky problem, this task is hard. However, in some specific situations, we have a complete answer.
When the genus of the curve is 2, the answer can be easily extracted from the work of Humbert \cite{H}:
the Jacobian of a genus 2 curve $C$ is non-simple and contains an elliptic curve $E$ with the induced polarisation of type $n$ if and only if the curve $C$ is an $n:1$ cover of $E$. 

In genus 3, the answer is well known and completely analogous. Proposition \ref{propIs3n} states that
the Jacobian of a genus 3 curve $C$ is non-simple and contains an elliptic curve $E$ 
with the induced polarisation of type $n$ if and only if the curve $C$ is an $n:1$ cover of $E$. 
However, \textit{if we restrict our attention to hyperelliptic Jacobians that belong to $\Is^3_2$ then
we find a nice characterisation of \'etale double covers of genus 2 curves:} see Proposition \ref{propIs32} and Corollary \ref{remIs32}.

Proposition \ref{propIs32} is generalised in Theorem \ref{genis32thm} which, 
roughly speaking, says that
if the Jacobian of a curve
contains an abelian subvariety of half the dimension and the type of the induced polarisation is twice the principal polarisation, 
then there is a double cover of curves that yields the Jacobian and the subvariety. 

Some results of this paper are contained in the PhD thesis of the author. The author would like to thank his supervisor Gregory Sankaran, University of Bath and Jagiellonian University in Krak\'ow for all support and Olivier Debarre for pointing out results from his paper.

\section{Preliminaries}
In this section we review the well-known facts and set up notation. For more details, see \cite{LB}.

By an abelian variety, we mean a projective complex torus. An abelian variety $A$ is isomorphic to $\CC^g/\Lambda_A$, where $\Lambda_A$ is a lattice of maximal rank.
For a $g\times 2g$ matrix $[Z\ D]$, the lattice spanned by the column vectors will be denoted by $\langle Z\ D\rangle$. We can always write $\Lambda_A$ in the form
$$\Lambda_A=\left<Z\ D\right>,$$
in such a way that $Z$ belongs to the Siegel space
$$\SH_g=\{Z=[z_{ij}]\in M(g,\CC):Z=\ ^tZ,\ \Ima Z>0\}$$ and a diagonal matrix $D=\diag(d_1,\ldots,d_g)$ with positive integer values $d_i$ such that $d_i|d_{i+1}, i=1,\ldots,g-1$. Moreover, on such a variety we always choose a polarisation of type $D$, usually denoted by $H$.

For any
 polarisation $H$ we define an isogeny $\phi_H\colon A\To \hat{A}$  with analytic representation given by $(\Ima Z)^{-1}$. The
exponent of $\phi_H$ is denoted by $e(H)$ and called the exponent of
the polarisation. By $K(H)$ we denote the kernel of $\phi_H$. Using a decomposition for $H$ one proves that $K(H)$ is isomorphic to $(\ZZ_{d_1}\times\ldots\times\ZZ_{d_g})^2$. On $K(H)$ there exists a multiplicative alternating nondegenerate form 
$$e^H(w_1,w_2)=
\exp(-2\pi i (\Ima H)(v_1,v_2))\in\CC^*,
$$
where $v_1,v_2$ are any preimages of $w_1,w_2$ in $\CC^g$.

We denote by $\cA_g=\SH_g/\Sp_{2g}(\ZZ)$ the moduli space of principally polarised abelian $g$-folds and by $\cA_D$ the moduli space of $D$-polarised abelian varieties. Inside $\cA_g$ we denote the locus of Jacobians by $\mathcal{J}$ and the locus of hyperelliptic Jacobians by $\mathcal{JH}$.

\subsection{Complementary abelian subvarieties}
In Section~\ref{Hum}, we will try to understand the locus of
non-simple abelian varieties. The idea is to improve the statement of uniqueness in Poincar\'e's Complete Reducibility Theorem.
Therefore we need to recall the following definitions and Poincar\'e's Reducibility Theorems. For details we refer to \cite{LB}.
\begin{definition}\cite[p.132]{LB}
Let $\iota\colon M\To A$ be an abelian subvariety of a principally
polarised abelian variety $(A,H)$. Then $\iota^*H$ is a polarisation
on $M$, denoted also by $H|_M$. Define the \emph{exponent of $M$} by $e(M)=e(\iota^*H)$. Moreover, we
define the \emph{norm-endomorphism of $A$ associated to $M$} by
$$
\Nm_M=\iota (e(M)\phi_{\iota^*H}^{-1})\hat{\iota}\phi_H.
$$
$\varepsilon_M=\frac{1}{e(M)}\Nm_M\in\Endo_\QQ(A)$ is called
the \emph{associated symmetric idempotent}.

Conversely for any symmetric idempotent
$\varepsilon\in\Endo_\QQ(A)$ there exists $n\in\NN$
such that $n\varepsilon\in\Endo(A)$, and we can define the abelian
subvariety $A^\varepsilon=\image(n\varepsilon)$.
\end{definition}

The next theorem is the main tool in proving Poincar\'e's Reducibility
Theorems.
\begin{theorem}\cite[Thm 5.3.2]{LB}
The assignments $M\mapsto\varepsilon_M$ and $\varepsilon\mapsto
A^\varepsilon$ are inverse to each other and give a bijection between
the sets of abelian subvarieties of $A$ and symmetric idempotents in
$\Endo_\QQ(A)$ 
\end{theorem}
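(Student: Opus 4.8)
\emph{What must be shown.} There are four things to establish: (i) for every abelian subvariety $\iota\colon M\hookrightarrow A$ the element $\varepsilon_M$ really is a symmetric idempotent of $\Endo_\QQ(A)$; (ii) for every symmetric idempotent $\varepsilon$ the abelian subvariety $A^\varepsilon$ is well defined, in particular independent of the integer $n$; (iii) $A^{\varepsilon_M}=M$; and (iv) $\varepsilon_{A^\varepsilon}=\varepsilon$. The plan is to move the whole picture into analytic representations on the tangent space $V=\CC^g$ of $A$, where a symmetric idempotent is forced to be an orthogonal projection; both the set of abelian subvarieties and the set of symmetric idempotents then become the set of $\CC$-subspaces $W\subseteq V$ meeting $\Lambda_A$ in a lattice of full rank, and the only real content is to see that $\varepsilon_{(-)}$ and $A^{(-)}$, read through these identifications, are mutually inverse.

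\emph{Step 1: $\varepsilon_M$ is a symmetric idempotent.} Since $H$ is principal, $\phi_H$ is an isomorphism; and by the definition of the exponent, $e(M)\phi_{\iota^*H}^{-1}$ lies in $\Hom(\hat M,M)$ (if $g\circ\phi_{\iota^*H}=[e(M)]_M$, which exists because $e(M)$ kills $\ker\phi_{\iota^*H}=K(\iota^*H)$, then $g=e(M)\phi_{\iota^*H}^{-1}$), so $\Nm_M\in\End(A)$. The computations of $\Nm_M^2$ and of the Rosati transform $\Nm_M'=\phi_H^{-1}\widehat{\Nm_M}\,\phi_H$ both reduce, via functoriality of $\varphi\mapsto\hat\varphi$ and the defining identity $\hat\iota\,\phi_H\,\iota=\phi_{\iota^*H}$ of the pullback polarisation (together with $\widehat{\phi_H}=\phi_H$, $\widehat{\phi_{\iota^*H}}=\phi_{\iota^*H}$, $\widehat{\hat\iota}=\iota$), to the identities $\Nm_M^2=e(M)\Nm_M$ and $\Nm_M'=\Nm_M$. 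Hence $\varepsilon_M=e(M)^{-1}\Nm_M$ is idempotent and symmetric.

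\emph{Step 2: analytic representations.} Unwinding the analytic representations in $\Nm_M=\iota\,(e(M)\phi_{\iota^*H}^{-1})\,\hat\iota\,\phi_H$ -- using that $\phi_H$ is represented by $v\mapsto H(v,\cdot)$, that $\hat\iota$ is represented by restriction of functionals to $\widetilde M$, and that $\phi_{\iota^*H}$ is represented by $u\mapsto H|_{\widetilde M}(u,\cdot)$ -- one finds that the analytic representation of $\varepsilon_M$ sends $v\in V$ to the unique $u\in\widetilde M$ with $H(u-v,\widetilde M)=0$; that is, $\varepsilon_M$ is represented by the $H$-orthogonal projection $\pi_{\widetilde M}\colon V\to V$ onto the tangent space $\widetilde M$ of $M$. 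Likewise, since the Rosati involution $\varphi\mapsto\phi_H^{-1}\widehat\varphi\,\phi_H$ corresponds on analytic representations to the adjoint for $H$, any symmetric idempotent $\varepsilon$ is represented by a self-adjoint idempotent $\CC$-endomorphism of $V$ preserving $\Lambda_A\otimes\QQ$, i.e.\ by the orthogonal projection onto $W:=\widetilde\varepsilon(V)$; and $A^\varepsilon=\image(n\varepsilon)$ is then a well-defined abelian subvariety (the image of a homomorphism of abelian varieties, independent of $n$ because $[m]_A$ is surjective onto any abelian subvariety) with tangent space $W$, so $A^\varepsilon=W/(W\cap\Lambda_A)$.

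\emph{Conclusion and the main difficulty.} By Step 2, $M\mapsto\widetilde M$ and $\varepsilon\mapsto\widetilde\varepsilon(V)$ both identify the relevant set with the set $\mathcal W$ of $\CC$-subspaces $W\subseteq V$ for which $W\cap\Lambda_A$ has rank $2\dim_\CC W$ (that both identifications are onto $\mathcal W$ uses, on the subvariety side, exactly the rationality of $\pi_{\widetilde M}$ proved in Step 2, and on the idempotent side that $\widetilde\varepsilon(V)\cap\Lambda_A$ has full rank). Under these identifications $\varepsilon_{(-)}$ becomes $W\mapsto\pi_W$ and $A^{(-)}$ becomes $\pi_W\mapsto W$, and these are mutually inverse; this yields (iii) $A^{\varepsilon_M}=M$ and (iv) $\varepsilon_{A^\varepsilon}=\varepsilon$ simultaneously (and, more concretely, $\Nm_M\iota=e(M)\iota$ already forces $\image\Nm_M=M$). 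The crux is the claim in Step 2 that the analytic representation of $\varepsilon_M$ is precisely $\pi_{\widetilde M}$: this is where one genuinely needs $e(M)$ to be the exponent of $K(\iota^*H)$ -- so that $\Nm_M$ has integral, not merely rational, coefficients -- and where the duality bookkeeping for the analytic representations of $\phi_H$, $\hat\iota$ and $\phi_{\iota^*H}^{-1}$ must be done carefully, keeping track of the (anti)linearity of the Hermitian form.
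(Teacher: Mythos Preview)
The paper does not prove this theorem at all: it is quoted verbatim from \cite[Thm~5.3.2]{LB} and used as a black box, so there is no proof in the paper to compare your attempt against.

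That said, your argument is essentially correct and is in fact the standard one (and close to the proof in \cite{LB}): pass to analytic representations, where the Rosati involution becomes the $H$-adjoint, so a symmetric idempotent in $\End_\QQ(A)$ is exactly an $H$-orthogonal projection $\pi_W$ onto a $\CC$-subspace $W\subseteq V$ with $\pi_W(\Lambda_A\otimes\QQ)\subseteq\Lambda_A\otimes\QQ$; meanwhile abelian subvarieties correspond to $\CC$-subspaces $W$ with $W\cap\Lambda_A$ of full rank. Your Step~2 computation that the analytic representation of $\varepsilon_M$ is $\pi_{\widetilde M}$ is the heart of the matter, and your concluding identification of both sides with the set $\mathcal W$ is the clean way to finish. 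One small point of exposition: in your ``Conclusion'' paragraph, the phrase about surjectivity onto $\mathcal W$ ``on the subvariety side'' using the rationality of $\pi_{\widetilde M}$ is slightly misplaced---surjectivity of $M\mapsto\widetilde M$ onto $\mathcal W$ is immediate from $W\mapsto W/(W\cap\Lambda_A)$; rather, the rationality of $\pi_{\widetilde M}$ is what shows the \emph{idempotent} side is onto $\mathcal W$ (given $W\in\mathcal W$, take $M=W/(W\cap\Lambda_A)$ and then $\varepsilon_M$ has image $W$). Also, you should state explicitly that an abelian subvariety is determined by its tangent space (so that $A^{\varepsilon_M}=M$ follows from equality of tangent spaces); you allude to this but do not say it outright.
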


The main advantage of translating the existence of subvarieties into
symmetric idempotents is that the latter have an obvious
canonical involution $\varepsilon\mapsto 1-\varepsilon$. This leads to
the following definition.

\begin{definition}\cite[p.125]{LB}\label{comabsub}
Let $A$ be a polarised abelian variety. Then the polarisation induces
a canonical involution on the set of abelian subvarieties of $A$:
$$M\mapsto N=A^{1-\varepsilon_M}$$ We call $N$ the \emph{complementary
abelian subvariety} of $M$ in $A$, and $(M,N)$ a \emph{pair of complementary
abelian subvarieties}.
\end{definition}

In this paper we often consider products of principally polarised abelian
varieties, and therefore we introduce the following notation. 
\begin{definition}
Suppose $k$ and $g$ are integers with
$0<k\leq \frac{g}{2}$, and $D=(d_1,\ldots, d_k)$ is a polarisation type. 
The \emph{complementary polarisation type}, denoted 
$\tilde{D}$, is the $(g-k)$-tuple $(1,\ldots,1,d_1,\ldots, d_k)$.
\end{definition}
If $(M,H_M)$ and $(N,H_N)$ are polarised abelian varieties
of types $D$ and $\tilde{D}$ then, even if not
written explicitly, we will treat the product $M\times N$ as the
$(D,\tilde{D})$-polarised variety with the canonical product
polarisation. Strictly speaking $(D,\tilde{D})$ is not a polarisation type, so one needs to permute the coordinates.
The following proposition shows that indeed the complementary abelian subvariety in the principally polarised abelian 
variety has a complementary polarisation type

\begin{prop}\label{cmplandroots}
Let $(A,H)$ be a principally polarised abelian variety. The
following conditions are equivalent:
\begin{enumerate}
\item there exists $M\subset A$ such that $H|_M$ is of type
  $D$.
\item there exists $N\subset A$ such that $H|_N$ is of type
  $\tilde{D}$.
\item there exists a pair $(M,N)$ of complementary abelian subvarieties
  in $A$ of types $D$ and $\tilde{D}$ respectively.
\item there exists a polarised isogeny $\rho\colon M\times N\To A$ with $$\ker\rho\cap (M\times\{0\})=\ker\rho\cap(\{0\}\times N)=\{0\}.$$

\end{enumerate}
\end{prop}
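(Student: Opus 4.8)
The plan is to prove the chain of implications $(1)\Rightarrow(3)\Rightarrow(2)$ and $(2)\Rightarrow(3)\Rightarrow(1)$, treating $(3)\Leftrightarrow(4)$ separately. Given the convention stated just before the proposition — that $M\times N$ carries the product of a type-$D$ and a type-$\tilde D$ polarisation — the equivalence $(3)\Leftrightarrow(4)$ will be essentially formal, so the real content is the single assertion: \emph{if $M\subset A$ is an abelian subvariety with $H|_M$ of type $D$, then its complementary abelian subvariety $N=A^{1-\varepsilon_M}$ has $H|_N$ of type exactly $\tilde D$} (and symmetrically with the roles of $M$ and $N$ exchanged).

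To prove this I would use the theory of complementary abelian subvarieties from \cite{LB}. First, the addition map $\rho\colon M\times N\To A$ is an isogeny, and since $\varepsilon_M$ is a symmetric idempotent the tangent spaces of $M$ and $N$ are $H$-orthogonal; hence $\rho^*H=H|_M\boxtimes H|_N$ is the product polarisation, so that $K(\rho^*H)=K(H|_M)\times K(H|_N)$ with $e^{\rho^*H}$ the (multiplicative) product of $e^{H|_M}$ and $e^{H|_N}$ on the two factors. Because $\Ima H$ is integral on $\Lambda_A$, the subgroup $\ker\rho\subset K(\rho^*H)$ is isotropic for $e^{\rho^*H}$; and because $H$ is principal, a degree count gives $|\ker\rho|^2=|K(\rho^*H)|=|K(H|_M)|\cdot|K(H|_N)|$, so $\ker\rho$ is in fact a maximal isotropic subgroup. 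The hypotheses $\ker\rho\cap(M\times\{0\})=\ker\rho\cap(\{0\}\times N)=\{0\}$ — automatic here because $M$ and $N$ are genuinely embedded in $A$ — say that both projections $\ker\rho\To K(H|_M)$ and $\ker\rho\To K(H|_N)$ are injective, and together with the degree count this forces $\ker\rho$ to be the graph of a group isomorphism $K(H|_N)\cong K(H|_M)$.

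The final step is to read off the polarisation type of $N$ from this isomorphism of finite abelian groups. Writing the type of $N$ as $E=(e_1,\ldots,e_{g-k})$ with $e_1\mid\cdots\mid e_{g-k}$, one has $K(H|_N)\cong\prod_j(\ZZ_{e_j})^2$ and $K(H|_M)\cong\prod_i(\ZZ_{d_i})^2$; a prime-by-prime computation (using the divisibility of the $d_i$, resp. the $e_j$) shows that $\prod_i(\ZZ_{d_i})^2$ has invariant-factor sequence $d_1,d_1,d_2,d_2,\ldots,d_k,d_k$, and similarly for $E$. Uniqueness of invariant factors then forces the multisets $\{d_i:d_i>1\}$ and $\{e_j:e_j>1\}$ to coincide, which — given $e_1\mid\cdots\mid e_{g-k}$ and that $N$ has dimension $g-k$ — pins down $E=(1,\ldots,1,d_1,\ldots,d_k)$ with exactly $g-2k$ ones, i.e. $E=\tilde D$. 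This proves $(1)\Rightarrow(3)$; interchanging the roles of $M$ and $N$ and arguing in the same way gives $(2)\Rightarrow(3)$, while $(3)\Rightarrow(1)$ and $(3)\Rightarrow(2)$ are immediate.

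For $(3)\Leftrightarrow(4)$: given a complementary pair as in (3), the addition isogeny satisfies $\rho^*H=H|_M\boxtimes H|_N$ and has the two kernel intersections trivial, which is (4). Conversely, given a polarised isogeny $\rho\colon M\times N\To A$ as in (4), the kernel conditions make $\rho|_{M\times\{0\}}$ and $\rho|_{\{0\}\times N}$ injective, so $M'=\rho(M\times\{0\})$ and $N'=\rho(\{0\}\times N)$ are abelian subvarieties of $A$ of types $D$ and $\tilde D$; and $\rho^*H$ being a product polarisation means their tangent spaces are $H$-orthogonal, equivalently $\varepsilon_{M'}+\varepsilon_{N'}=1$, so $(M',N')$ is a complementary pair. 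I expect the main obstacle to be organising the isogeny/polarisation bookkeeping cleanly — in particular verifying that $\rho^*H$ is the product polarisation and that $\ker\rho$ is Lagrangian in $K(\rho^*H)$, and locating the precise statements in \cite{LB} — rather than the invariant-factor argument, which is routine; a minor additional point is that $(D,\tilde D)$ is not literally in increasing-divisibility order, so coordinates must be permuted before it is read as a genuine polarisation type.
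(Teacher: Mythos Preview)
Your proof is correct and essentially complete. The paper, by contrast, dispatches $(1)\Leftrightarrow(2)\Leftrightarrow(3)$ in a single sentence by citing \cite[Cor~12.1.5]{LB}, which is precisely the statement that the complementary abelian subvariety of a type-$D$ subvariety in a principally polarised $A$ has type $\tilde D$. What you have done is unpack that citation: you show $\ker\rho$ is Lagrangian in $K(H|_M)\times K(H|_N)$, deduce that it is the graph of a group isomorphism $K(H|_M)\cong K(H|_N)$, and then read off the polarisation type from the invariant factors. This is a genuinely self-contained route and is in fact the argument underlying the cited corollary in \cite{LB}; it buys you independence from the reference at the cost of a page of bookkeeping.

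For $(3)\Leftrightarrow(4)$ your argument and the paper's are close in spirit. The paper does $(4)\Rightarrow(3)$ by the one-line identity
\[
\varepsilon_{M}+\varepsilon_{N}
=\iota_M\phi_{\iota_M^*H}^{-1}\hat\iota_M\phi_H+\iota_N\phi_{\iota_N^*H}^{-1}\hat\iota_N\phi_H
=\rho\,\phi_{\rho^*H}^{-1}\hat\rho\,\phi_H
=\phi_H^{-1}\phi_H=1,
\]
which is slightly more direct than your step ``$H$-orthogonal tangent spaces $\Leftrightarrow$ $\varepsilon_{M'}+\varepsilon_{N'}=1$''; the latter is true (the analytic representation of $\varepsilon_M$ is the $H$-orthogonal projection onto $T_0M$), but if you want to match the paper's level of rigour you could replace that phrase with the displayed computation above.
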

\begin{proof}
The equivalence of conditions (1), (2), (3) follows from the
definition and~\cite[Cor 12.1.5]{LB}.

$(3)\Rightarrow(4)$ is a consequence of~\cite[Cor
  5.3.6]{LB}. The condition on the kernel states that
$\rho|_{M\times\{0\}}$ and $\rho|_{\{0\}\times N}$ are
inclusions. 

$(4)\Rightarrow(3)$ Let us denote the inclusions by
$\iota_M=\rho|_{M\times\{0\}}$ and $\iota_N=\rho|_{\{0\}\times N}$
Then $\rho(m,n)=\iota_M(m)+\iota_N(n)$ and so
$$\varepsilon_M+\varepsilon_N=
\iota_M\phi^{-1}_{\iota_M^*H}\hat{\iota_M}\phi_H+
\iota_N\phi^{-1}_{\iota_N^*H}\hat{\iota_N}\phi_H=
\rho\phi^{-1}_{\rho^*H}\hat{\rho}\phi_H=\phi^{-1}_H\phi_H=1 $$
\end{proof}

\begin{theorem}\cite[Thm 5.3.5]{LB}\label{prthm}
Let $(A,H)$ be a polarised abelian variety and $(M,N)$ a pair of
complementary abelian subvarieties of $A$. Then the map
$$
(\Nm_M,\Nm_N)\colon A\To M\times N
$$ 
is an isogeny.
\end{theorem}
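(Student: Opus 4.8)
The plan is to build the proof on the two structural facts recorded above. First, each norm-endomorphism is an \emph{honest} endomorphism of $A$ satisfying $\Nm_M=e(M)\varepsilon_M$ with $\image\Nm_M\subseteq\image\iota_M=M$ (and symmetrically for $N$); this is immediate from the definition of $\Nm_M$ and of $\varepsilon_M=\tfrac{1}{e(M)}\Nm_M$. Second, the complementarity of $M$ and $N$ forces the idempotent relation $\varepsilon_M+\varepsilon_N=\id$. The latter I would deduce directly from the definition $N=A^{1-\varepsilon_M}$: since the assignments $M\mapsto\varepsilon_M$ and $\varepsilon\mapsto A^\varepsilon$ are mutually inverse (\cite[Thm 5.3.2]{LB}), applying $\varepsilon_{(-)}$ to $N=A^{1-\varepsilon_M}$ gives $\varepsilon_N=1-\varepsilon_M$. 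This is exactly the identity that surfaces at the end of the proof of Proposition~\ref{cmplandroots}.

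Next I would reduce the claim to a finiteness statement about the kernel. The map $(\Nm_M,\Nm_N)$ is a homomorphism of abelian varieties landing in $M\times N$, because $\image\Nm_M\subseteq M$ and $\image\Nm_N\subseteq N$. Moreover $\dim(M\times N)=\dim M+\dim N=\dim A$: the complementary idempotents $\varepsilon_M,\varepsilon_N$ (with $\varepsilon_M+\varepsilon_N=\id$ and $\varepsilon_M\varepsilon_N=0$) split the tangent space as $T_0A=\varepsilon_MT_0A\oplus\varepsilon_NT_0A$, whose summands have dimensions $\dim M$ and $\dim N$. Hence it suffices to show that $(\Nm_M,\Nm_N)$ has finite kernel: its image is then a subvariety of $M\times N$ of dimension $\dim A=\dim(M\times N)$, so the image is all of $M\times N$, and a surjective homomorphism with finite kernel is an isogeny.

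The heart of the argument is a single identity in $\End(A)$. Scaling $\varepsilon_M+\varepsilon_N=\id$ by $e(M)e(N)$ and rewriting via $\Nm_M=e(M)\varepsilon_M$, $\Nm_N=e(N)\varepsilon_N$ yields
\[
e(N)\Nm_M+e(M)\Nm_N=e(M)e(N)(\varepsilon_M+\varepsilon_N)=e(M)e(N)\,\id_A .
\]
Every term is a genuine endomorphism, so this is an honest equality of maps $A\To A$. Consequently, if $a\in\ker(\Nm_M,\Nm_N)$, then $\Nm_M(a)=\Nm_N(a)=0$, and so $e(M)e(N)\,a=e(N)\Nm_M(a)+e(M)\Nm_N(a)=0$. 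Thus $\ker(\Nm_M,\Nm_N)$ is contained in the $e(M)e(N)$-torsion of $A$, which is finite; together with the dimension count this gives the isogeny.

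The only delicate point — and the step I expect to be the main obstacle — is the insistence that $\Nm_M$ and $\Nm_N$ are genuine endomorphisms, not merely elements of $\End_\QQ(A)$, for otherwise the displayed identity and the torsion conclusion would be meaningless. This is precisely the purpose of the exponent $e(M)=e(\iota_M^*H)$ built into the definition of $\Nm_M$: by definition of the exponent, $e(M)\phi_{\iota_M^*H}^{-1}$ is an honest isogeny, so the composite defining $\Nm_M$ is an honest endomorphism of $A$. Once this is verified the remaining steps are formal.
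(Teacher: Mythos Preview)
The paper does not give its own proof of this statement: it is quoted verbatim as \cite[Thm 5.3.5]{LB} and used as background, with no proof environment attached. So there is nothing in the paper to compare against beyond the citation.

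Your argument is correct and is essentially the standard one. The key identity $e(N)\Nm_M+e(M)\Nm_N=e(M)e(N)\,\id_A$, obtained by clearing denominators in $\varepsilon_M+\varepsilon_N=1$, immediately bounds the kernel by the $e(M)e(N)$-torsion, and the idempotent splitting of $T_0A$ gives $\dim M+\dim N=\dim A$, hence surjectivity. Your care about $\Nm_M,\Nm_N$ being honest endomorphisms (via the definition of the exponent) is exactly the point that makes the displayed identity meaningful in $\End(A)$ rather than only in $\End_\QQ(A)$. This is the same mechanism behind the proof in \cite{LB}, so had the paper reproduced that proof you would be matching it.
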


\begin{theorem}\cite[Thm 5.3.7]{LB}\label{pcrthm}
For an abelian variety $A$ there is an isogeny
$$
A\To A_1^{n_1}\times\ldots\times A_r^{n_r}
$$ 
with simple
abelian varieties $A_i$ not isogenous to each other. Moreover the
abelian varieties $A_i$ and integers $n_i$ are uniquely determined up
to isogenies and permutations.
\end{theorem}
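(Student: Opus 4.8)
The plan is to prove existence by induction on $\dim A$ using the theory of complementary abelian subvarieties already set up, and then to prove uniqueness by a Schur-type argument carried out in the isogeny category, comparing dimensions of rational homomorphism spaces. Throughout I work up to isogeny, so I may freely equip $A$ with a polarisation and invoke Definition~\ref{comabsub} together with Theorem~\ref{prthm}.

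For existence, if $A$ is simple there is nothing to prove. Otherwise $A$ admits a nonzero proper abelian subvariety $M$; letting $N$ be its complementary abelian subvariety, Theorem~\ref{prthm} produces an isogeny $A\To M\times N$ with $\dim M,\dim N<\dim A$. By the inductive hypothesis each of $M$ and $N$ is isogenous to a product of simple abelian varieties, hence so is $A$. Collecting together those simple factors that are mutually isogenous and replacing each isogeny class by a single representative $A_i$ yields an isogeny $A\To A_1^{n_1}\times\cdots\times A_r^{n_r}$ with the $A_i$ simple and pairwise non-isogenous.

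For uniqueness I use two standard facts about simple abelian varieties, both flowing from the observation that every nonzero homomorphism between simple abelian varieties is an isogeny: first, $\Hom_\QQ(A_i,A_j)=0$ whenever $A_i,A_j$ are non-isogenous simple varieties; second, $\Endo_\QQ(A_i)$ is a division algebra when $A_i$ is simple. Suppose $A$ is isogenous both to $\prod_iA_i^{n_i}$ and to $\prod_jB_j^{m_j}$, with the $A_i$ (resp.\ the $B_j$) simple and pairwise non-isogenous. Applying $\Hom_\QQ(A_i,-)$ and using the first fact to kill all cross terms, I read off $\Hom_\QQ(A_i,A)\cong\Endo_\QQ(A_i)^{n_i}$ from the first decomposition. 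Comparing with the second forces some $B_j$ to be isogenous to $A_i$ (else this space would vanish), and for that unique index $\Hom_\QQ(A_i,A)\cong\Endo_\QQ(A_i)^{m_j}$, since an isogeny $A_i\To B_j$ identifies $\Hom_\QQ(A_i,B_j)$ with $\Endo_\QQ(A_i)$. As $\Endo_\QQ(A_i)$ is a nonzero finite-dimensional $\QQ$-algebra, counting $\QQ$-dimensions gives $n_i=m_j$. This produces a multiplicity-preserving bijection between the two families, which is exactly the asserted uniqueness up to isogeny and permutation.

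The main obstacle is uniqueness rather than existence: one must commit to the isogeny category, where $\Hom_\QQ$ converts the geometric product decomposition into an algebraic direct-sum decomposition, and then exploit the division-algebra structure of $\Endo_\QQ(A_i)$ to recover the multiplicities. Equivalently, uniqueness is the Wedderburn decomposition of the semisimple algebra $\Endo_\QQ(A)\cong\prod_iM_{n_i}(\Endo_\QQ(A_i))$ into simple factors; the point requiring care is that it is the isogeny class of $A$, not $A$ itself, that pins down this algebra and hence the data $(A_i,n_i)$.
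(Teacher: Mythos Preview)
Your argument is correct and is essentially the standard proof of Poincar\'e's Complete Reducibility Theorem. Note, however, that the paper does not give its own proof of this statement: it is quoted from \cite[Thm~5.3.7]{LB} as background in the preliminaries, so there is no in-paper proof to compare against. What you have written is precisely the proof one finds in the cited reference: existence by induction via complementary subvarieties (Theorem~\ref{prthm}), and uniqueness by a Schur-lemma computation in $\Hom_\QQ$ using that $\Endo_\QQ$ of a simple abelian variety is a division algebra.
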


Theorems \ref{prthm} and \ref{pcrthm} are Poincar\'e's Reducibility and Complete Reducibility Theorems.

\subsection{Symplectic forms on finite abelian groups}\label{ssanty}

Later, we shall be interested in isotropic subspaces of $K(H)$ and their
behaviour under the action of the symplectic group. Therefore we need some basic facts
from the theory of finite symplectic
$\ZZ$-modules. We will use the fundamental theorem of finite
abelian groups.
\begin{theorem}
Any finite $\ZZ$-module $X$ can be written uniquely in the form
$\ZZ_{d_1}\times\ldots\times\ZZ_{d_k}$, where $d_1>1$
and $d_i|d_{i+1},\ i=1,\ldots,k-1$, for a unique $(d_1,\ldots,d_{k})$. 
We will call $(d_1,\ldots,d_{k})$  the \emph{type of the $\ZZ$-module}. 
\end{theorem}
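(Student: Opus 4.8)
The plan is to prove existence and uniqueness separately, deriving existence from the module theory of the principal ideal domain $\ZZ$ and uniqueness by extracting the tuple $(d_1,\ldots,d_k)$ from counting invariants that depend only on the isomorphism type of $X$.

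For existence, I would first note that a finite $\ZZ$-module is finitely generated, so there is a surjection $\ZZ^m\To X$ whose kernel is again finitely generated; this yields a presentation $\ZZ^n\To\ZZ^m\To X\To 0$ encoded by an integer matrix $A$, with $X\cong\ZZ^m/A\ZZ^n$. I would then invoke the Smith normal form: there exist $P\in\mathrm{GL}_m(\ZZ)$ and $Q\in\mathrm{GL}_n(\ZZ)$ such that $PAQ$ is diagonal with nonnegative entries $e_1\mid e_2\mid\cdots$. Since $P,Q$ are invertible over $\ZZ$ they induce automorphisms of $\ZZ^m$ and $\ZZ^n$ and so do not change $X$ up to isomorphism, giving $X\cong\bigoplus_i\ZZ/e_i\ZZ$ together with a free part coming from the zero diagonal entries. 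Because $X$ is finite there is no free part; discarding the factors with $e_i=1$ and relabelling the remaining ones produces $X\cong\ZZ_{d_1}\times\cdots\times\ZZ_{d_k}$ with $d_1>1$ and $d_i\mid d_{i+1}$.

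For uniqueness the key is to recover the tuple intrinsically, without reference to any chosen decomposition. For every prime $p$ and integer $j\ge 1$ I would consider the subgroup $X[p^j]=\{x\in X:p^jx=0\}$, whose order is an isomorphism invariant of $X$. If $X\cong\prod_i\ZZ_{d_i}$, then $X[p^j]\cong\prod_i\ZZ_{p^{\min(j,v_p(d_i))}}$, so $|X[p^j]|=p^{\sum_i\min(j,v_p(d_i))}$ and hence $|X[p^j]|/|X[p^{j-1}]|=p^{N_{p,j}}$ with $N_{p,j}=\#\{i:v_p(d_i)\ge j\}$. Thus every $N_{p,j}$ is determined by $X$ alone, and so is the number of invariant factors, which equals $k=\max_p N_{p,1}$: the maximum is attained at any prime dividing $d_1$, since by the divisibility condition such a prime divides every $d_i$.

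The final step, which I expect to be the main (if mild) obstacle, is to reassemble the per-prime data into the single tuple $(d_1,\ldots,d_k)$. For each prime $p$ the numbers $N_{p,j}$ determine the multiset of valuations $v_p(d_1),\ldots,v_p(d_k)$, padded with zeros to length $k$. A priori each prime furnishes its own partition, and the point is to glue these together: because $d_i\mid d_{i+1}$ forces $v_p(d_1)\le\cdots\le v_p(d_k)$ simultaneously for every $p$, the increasingly ordered valuations must line up factor by factor, so $v_p(d_i)$ is necessarily the $i$-th smallest value. Setting $d_i=\prod_p p^{v_p(d_i)}$ then gives the only tuple compatible with both the invariant counts and the chained divisibility, which establishes uniqueness and completes the proof.
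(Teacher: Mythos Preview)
Your argument is correct and is one of the standard proofs of the structure theorem for finite abelian groups: existence via Smith normal form of a presentation matrix, and uniqueness by reading off the $p$-adic valuations of the invariant factors from the sizes of the $p^j$-torsion subgroups. The gluing step is fine as written, since the divisibility chain forces each $v_p(d_i)$ to be the $i$-th term of the sorted list of valuations.

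However, there is nothing to compare against: the paper does not prove this theorem. It is quoted as the ``fundamental theorem of finite abelian groups'' and stated without proof, serving only to fix terminology (the \emph{type} of a $\ZZ$-module) for later use. So your proposal is not so much a different route as the supply of a proof where the paper, by design, gives none.
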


\begin{definition}
We say that $X$ is of \emph{dimension $k$}
and a \emph{basis of $X$} is an image of any basis of $\ZZ^{k}$ by an
epimorphism 
$$
\ZZ^{k} \To \ZZ_{d_1}\times\ldots\times\ZZ_{d_k}.
$$

Assume $X$ is of the form
$(\ZZ_{d_1}\times\ldots\times\ZZ_{d_k})^2$. A \emph{symplectic
form on $X$} is the image of a symplectic form on
$\ZZ^{k}\times\ZZ^{k}$. 
\end{definition}

\begin{ex}\cite[Lem 3.1.4]{LB}
Let $H$ be a polarisation of type 
$(d_1,\ldots,d_{k})$, with $d_{m}=1, d_{m+1}>1$, for some $m$ on an abelian variety $A$ and consider a decomposition $V_1\oplus V_2$ for $H$. Then
$$K(H)=K(H)_1\oplus K(H)_2,\ \text{ with }\ K(H)_1\cong K(H)_2\cong \ZZ_{d_{m+1}}\times\ldots\times\ZZ_{d_k},$$
so both $K(H)_1$ and $K(H)_2$ are of the same type 
and of dimension $k-m$.
Moreover $K(H)$ has symplectic form $e^H$.
\end{ex}

It is convenient to work under the assumption that the domain and codomain are of the same
type. Therefore, we define
\begin{definition}
Let $(X,\omega_X)$ and $(Y,\omega_Y)$ be symplectic
$\ZZ$-modules of the same type. Then a $\ZZ$-linear map
$f\colon X\To Y$ is called an \emph{antisymplectic map} if for all
$x,\,y\in X$, we have 
\begin{equation}\label{eqanty}
\omega_X(x,y)=-\omega_Y(f(x),f(y)).
\end{equation}
\end{definition}
\begin{prop}\label{maxisosub}
Every antisymplectic map is a bijection and the inverse map
is also antisymplectic. Moreover, the space of
antisymplectic maps is modelled on $\Sp(X,\ZZ)$, i.e.\ for every
antisymplectic $f,\,g\colon X\To Y$, we have $g^{-1}\circ f\in
\Sp(X,\ZZ)$ and for all $s_X\in \Sp(X,\ZZ)$, the maps
$f\circ s_X$ are antisymplectic. 
By symmetry it is also modelled on $\Sp(Y,\ZZ)$.  

\end{prop}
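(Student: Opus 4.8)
The plan is to derive everything from nondegeneracy of the forms together with the single nonformal input that $X$ and $Y$, having the same type, are finite $\ZZ$-modules of equal cardinality. First I would establish bijectivity. If $f\colon X\To Y$ is antisymplectic and $f(x)=0$, then for every $y\in X$ the identity \eqref{eqanty} gives $\omega_X(x,y)=-\omega_Y(f(x),f(y))=0$; since $\omega_X$ is nondegenerate this forces $x=0$, so $f$ is injective. An injective $\ZZ$-linear map between finite modules of equal cardinality is automatically surjective, hence $f$ is a bijection.

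Next I would check that $f^{-1}$ is antisymplectic. Any two elements of $Y$ can be written uniquely as $f(x),f(y)$ with $x,y\in X$; applying \eqref{eqanty} gives $\omega_Y(f(x),f(y))=-\omega_X(x,y)=-\omega_X\big(f^{-1}(f(x)),f^{-1}(f(y))\big)$, which is exactly the defining identity \eqref{eqanty} for $f^{-1}\colon Y\To X$.

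For the torsor statements, take antisymplectic $f,g\colon X\To Y$ and consider $g^{-1}\circ f\colon X\To X$. Using that $g^{-1}$ is antisymplectic (previous paragraph) and then that $f$ is antisymplectic, for all $x,y\in X$ one computes $\omega_X\big(g^{-1}f(x),g^{-1}f(y)\big)=-\omega_Y(f(x),f(y))=\omega_X(x,y)$, so $g^{-1}\circ f$ is a $\ZZ$-linear automorphism preserving $\omega_X$, i.e.\ an element of $\Sp(X,\ZZ)$. Conversely, if $s_X\in\Sp(X,\ZZ)$, then for all $x,y\in X$ we have $\omega_Y\big(f(s_X(x)),f(s_X(y))\big)=-\omega_X(s_X(x),s_X(y))=-\omega_X(x,y)$, so $f\circ s_X$ is antisymplectic. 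The final ``by symmetry'' assertion is obtained by running these same two arguments with the antisymplectic maps $f^{-1},g^{-1}\colon Y\To X$ in place of $f,g$: this yields $(f^{-1})^{-1}\circ g^{-1}=f\circ g^{-1}\in\Sp(Y,\ZZ)$ and shows $s_Y\circ f$ is antisymplectic for every $s_Y\in\Sp(Y,\ZZ)$.

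There is no genuine obstacle here; the proof is entirely formal once one records the observation that sameness of type guarantees $|X|=|Y|<\infty$, which is the only place finiteness enters and what upgrades injectivity to bijectivity.
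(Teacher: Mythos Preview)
Your proof is correct and follows the same approach as the paper: injectivity from nondegeneracy, bijectivity from equality of cardinality (same type), and the remaining assertions from the identity $(-1)\cdot(-1)=1$. The paper's proof is simply a terser version of what you wrote.
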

\begin{proof}
By equation \eqref{eqanty}, if $f(x)=0$, then $\omega_X(x,y)=0$ for
every $y$, so $x=0$, which means $f$ is injective. Bijectivity comes
from the domain and codomain having the same type. The rest of the proposition comes
from the fact that $(-1)\cdot(-1)=1$. 
\end{proof}

\begin{prop}\label{maxisosub2}
Let $(X,\omega_X)$ and $(Y,\omega_Y)$ be symplectic
$\ZZ$-modules of the same type. Consider $(X\oplus Y,\omega_X+\omega_Y)$.
Then the set of graphs of antisymplectic maps is the set of maximal
isotropic subspaces of $X\oplus Y$ intersecting $X$ and $Y$ only in
$\{0\}$. 

In particular, all maximal
isotropic subspaces of $X\oplus Y$ intersecting $X$ and $Y$ only in
$\{0\}$ are equivalent under the actions of symplectic groups on $X$ and on $Y$.
\end{prop}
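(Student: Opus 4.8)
The plan is to prove the asserted set equality by showing the two inclusions separately, and then to deduce the ``in particular'' clause from Proposition \ref{maxisosub}. The one substantive input I will need is the standard structural fact that a maximal isotropic subspace of a finite symplectic $\ZZ$-module coincides with its own perpendicular, equivalently has order equal to the square root of the order of the module.

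First I would check that the graph $\Gamma_f=\{(x,f(x)):x\in X\}$ of an antisymplectic map $f\colon X\To Y$ is a maximal isotropic subspace of $(X\oplus Y,\omega_X+\omega_Y)$ meeting $X\oplus\{0\}$ and $\{0\}\oplus Y$ only in $0$. Isotropy is a one-line consequence of \eqref{eqanty}, since $(\omega_X+\omega_Y)\big((x,f(x)),(y,f(y))\big)=\omega_X(x,y)+\omega_Y(f(x),f(y))=0$; the two intersections are trivial because $f$ is a bijection by Proposition \ref{maxisosub}. For maximality I would count: nondegeneracy of $\omega_X+\omega_Y$ gives $|L|\cdot|L^{\perp}|=|X\oplus Y|$ for every subspace $L$, and an isotropic $L$ satisfies $L\subseteq L^{\perp}$, so $|L|\le\sqrt{|X\oplus Y|}$ with equality forcing $L=L^{\perp}$; since $|\Gamma_f|=|X|=\sqrt{|X\oplus Y|}$, the graph $\Gamma_f$ is self-perpendicular and hence maximal isotropic.

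For the reverse inclusion, let $L\subseteq X\oplus Y$ be a maximal isotropic subspace with $L\cap X=L\cap Y=\{0\}$. Here I would invoke the lemma mentioned above: if $L\subsetneq L^{\perp}$, then adjoining to $L$ any $v\in L^{\perp}\setminus L$ yields a strictly larger isotropic subspace, because the form is alternating, so $\omega(v,v)=0$ and $\omega(v,l)=0$ for $l\in L$; this contradicts maximality, so $L=L^{\perp}$ and $|L|=\sqrt{|X\oplus Y|}=|X|=|Y|$. Then the restrictions to $L$ of the two coordinate projections are injective (by the intersection hypotheses) maps of finite groups of equal cardinality, hence isomorphisms, and composing the inverse of the first with the second produces an isomorphism $f\colon X\To Y$ with $L=\Gamma_f$. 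Isotropy of $L$, spelled out on pairs $(x,f(x)),(y,f(y))$, is exactly the relation $\omega_X(x,y)=-\omega_Y(f(x),f(y))$, so $f$ is antisymplectic.

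Finally, for the transitivity statement I would fix one antisymplectic map $g\colon X\To Y$; for $s\in\Sp(X,\ZZ)$ the automorphism $s\oplus\id_Y$ of $X\oplus Y$ preserves $\omega_X+\omega_Y$ and carries $\Gamma_g$ to $\Gamma_{g\circ s^{-1}}$, and by Proposition \ref{maxisosub} the set $\{g\circ s:s\in\Sp(X,\ZZ)\}$ is precisely the set of all antisymplectic maps $X\To Y$. Combined with the two inclusions, this shows $\Sp(X,\ZZ)$ acts transitively on the maximal isotropic subspaces meeting $X$ and $Y$ trivially, and symmetrically for $\Sp(Y,\ZZ)$. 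I expect the only genuinely delicate point to be the self-perpendicularity of maximal isotropic subspaces in the finite, possibly mixed-type, symplectic module $X\oplus Y$; everything else is formal bookkeeping with \eqref{eqanty} and the orders of subgroups.
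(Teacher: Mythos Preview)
Your proposal is correct and follows essentially the same route as the paper: show graphs of antisymplectic maps are isotropic with the right intersection properties, then for the converse use the projections $\pi_X,\pi_Y$ from a maximal isotropic $L$ to build $f=\pi_Y\circ\pi_X^{-1}$, and deduce transitivity from Proposition~\ref{maxisosub}. The paper's version is terser---it asserts without argument that the graph has ``the desired properties'' and that the projections are bijections---so your explicit counting via $|L|\cdot|L^{\perp}|=|X\oplus Y|$ and $L=L^{\perp}$ fills in exactly the step you flagged as delicate, which the paper leaves implicit.
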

\begin{proof}
It is obvious that the graph of an antisymplectic map is an isotropic
subspace with the desired properties. For the converse, let $Z$ be a
maximal isotropic subspace. 
Then the projections $\pi_X\colon Z\To X$ and
$\pi_Y\colon Z\To Y$ are bijections. Moreover $\pi_Y\circ\pi_X^{-1}$ is
antisymplectic and $Z$ is the graph of $\pi_Y\circ\pi_X^{-1}$.

The second part of the proposition is a direct application of Proposition \ref{maxisosub}.
\end{proof}

\section{Generalised Humbert locus}\label{Hum}

\subsection{Background -- Humbert surfaces}
We begin the study of moduli of non-simple
abelian varieties with the surface case, by recalling the Humbert surfaces of square discriminant.

\begin{theorem}\label{humb}
Let $p$ be a positive integer. Let $(A,H)$ be a principally polarised abelian
surface.  The following conditions are equivalent:
\begin{enumerate}
\item there exists an elliptic curve $E\subset A$ such that
  $H|_E$ is of type $p$;
\item there exists an exact sequence 
$$
0\To E \To A \To F \To 0,
$$ 
and therefore its dual
$$
0\To F \To A \To E \To 0,
$$ 
such that the induced map $E\times F\To A$ is an isogeny of degree $p^2$;
\item there exists a pair $(E,F)$ of complementary elliptic curves in
  $A$ of type $p$;
\item $\Endo(A)$ contains a primitive symmetric endomorphism $f$ with
  discriminant $p^2$;
\item $\Endo(A)$ contains a symmetric endomorphism $f$ with analytic
  and rational representations given by
$$
\left[\begin{array}{cc}
0&0\\
-1&p\\
\end{array}\right],\quad\left[\begin{array}{cccc}
0&-1&0&0\\
0&p&0&0\\
0&0&0&0\\
0&0&-1&p\\
\end{array}\right];
$$
\item $(A,H)$ is isomorphic to an abelian surface defined by a period matrix
$$
\left[
\begin{array}{cccc}
pt_2&t_2&1&0\\
t_2&t_3&0&1\\
\end{array}\right],
$$ with elliptic curves defined by period matrices $[t_2\ 1]$
and $[pt_3-t_2\ 1]$ embedded as $s\mapsto (ps,s)$ and
$s\mapsto (0,s)$.
\end{enumerate}
\end{theorem}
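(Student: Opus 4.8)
The plan is to treat the six conditions as successive translations of one another: $(1)$–$(3)$ are the theory of complementary abelian subvarieties from Section~2 specialised to dimension two; $(3)$–$(5)$ reformulate this in terms of norm-endomorphisms and a normal form under $\Sp_4(\ZZ)$; and $(5)$–$(6)$ is the explicit computation of a period matrix. I would begin by noting that $(1)\Leftrightarrow(2)\Leftrightarrow(3)$ is essentially Proposition~\ref{cmplandroots} applied with $g=2$, $k=1$, $D=(p)$: here the complementary type is again $\tilde D=(p)$, so a complementary pair consists of two elliptic curves, each with induced polarisation of type $p$, which gives $(1)\Leftrightarrow(3)$ directly (the roles of $M$ and $N$ being symmetric in dimension two). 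For $(2)$, Proposition~\ref{cmplandroots}(4) supplies a polarised isogeny $\rho\colon E\times F\To A$ with $\rho^*H$ the product polarisation of type $(p,p)$; since a polarised isogeny multiplies the Euler characteristic of the polarisation (its Pfaffian, here $p\cdot p$ on $E\times F$ against $1$ on $A$) by its degree, $\deg\rho=p^2$. The two exact sequences encode the pair — the kernel condition in Proposition~\ref{cmplandroots}(4) makes $F\To A\To A/E$ an isogeny, and one identifies $A/E$ with $F$ using $\hat A\cong A$ — and dualising interchanges them.

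For $(3)\Leftrightarrow(4)\Leftrightarrow(5)$ I would argue through norm-endomorphisms. Given a complementary pair $(E,F)$ of type $p$, put $f=\Nm_E$. Then $\varepsilon_E=\tfrac1p\Nm_E$ is a symmetric idempotent, $e(E)=p$, and $f$ is a symmetric endomorphism with $f^2=pf$; hence the rational representation of $f$ has characteristic polynomial $t^2(t-p)^2$, and one checks that $f$ is a primitive symmetric endomorphism of discriminant $p^2$ in the sense of \cite[Ch.~5]{LB}. Conversely, a primitive symmetric $f$ of discriminant $p^2$ produces, after the standard normalisation, a symmetric idempotent $\varepsilon$ whose associated abelian subvariety is an elliptic curve $E$ of exponent $p$, hence of type $(p)$ as $\dim E=1$; by \cite[Thm~5.3.2]{LB} this $E$ is a member of a complementary pair. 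For $(4)\Leftrightarrow(5)$ one classifies such $f$ up to the action of $\Sp_4(\ZZ)$ on the rational representation (equivalently, up to a symplectic change of the symplectic basis of $\Lambda_A$): elementary symplectic operations bring the rational representation to the displayed form, and the displayed analytic representation is then forced. The reverse implication is the direct check that the two displayed matrices define a primitive symmetric endomorphism of discriminant $p^2$.

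For $(5)\Leftrightarrow(6)$, suppose $A\cong\CC^2/\langle Z\ \id_2\rangle$ carries the endomorphism $f$ with analytic representation $M_a$ and rational representation $M_r$ as displayed in $(5)$. Compatibility of the two representations is the matrix identity $M_a\,[Z\ \id_2]=[Z\ \id_2]\,M_r$, which for $Z=[z_{ij}]\in\SH_2$ is linear in the $z_{ij}$ and reduces precisely to $z_{11}=pz_{12}$ (with $z_{12}=z_{21}$ and $z_{22}$ free); putting $t_2=z_{12}$ and $t_3=z_{22}$ yields the period matrix of $(6)$, and conversely this period matrix satisfies the identity, so $f$ is an endomorphism of the corresponding $A$. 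Finally, pulling $\Lambda_A$ back along the maps $s\mapsto(ps,s)$ and $s\mapsto(0,s)$ gives the lattices $\langle t_2\ 1\rangle$ and $\langle pt_3-t_2\ 1\rangle$, so these maps realise the claimed embeddings; restricting $\Ima H$ to the corresponding rank-two sublattices of $\Lambda_A$ one computes Pfaffian $p$ in each case, confirming that both curves carry the induced polarisation of type $p$ (as they must, since $\tilde D=(p)$).

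The step requiring genuine work is the $\Sp_4(\ZZ)$-classification inside $(4)\Leftrightarrow(5)$: showing that \emph{every} primitive symmetric endomorphism of discriminant $p^2$ is symplectically — not merely rationally — equivalent to the one displayed. The remaining care is bookkeeping: verifying that $\Nm_E$ (or the relevant primitive multiple) really has discriminant $p^2$, and matching the exponent $e(E)$ against the definition of discriminant in \cite{LB}; everything else is a specialisation of Section~2 or a routine matrix computation.
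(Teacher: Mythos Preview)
Your proposal is correct and follows the same overall architecture as the paper, but the paper is more economical in one key place: it does not carry out the $\Sp_4(\ZZ)$-classification you flag as the ``step requiring genuine work'' for $(4)\Leftrightarrow(5)$. Instead, the paper cites \cite[Section~4]{BW} as a black box for the entire block $(4)\Leftrightarrow(5)\Leftrightarrow(6)$, and then closes the cycle by proving only the two one-way implications $(3)\Rightarrow(4)$ (via $f=\Nm_E$, exactly as you do) and $(6)\Rightarrow(1)$ (by computing $\phi_{\iota_E^*H}$ directly from $(\Ima Z)^{-1}$ rather than via Pfaffians). So your route is more self-contained---you reconstruct the content of \cite{BW} through the compatibility identity $M_a[Z\ \id_2]=[Z\ \id_2]M_r$ and the symplectic normal form---while the paper's route is shorter by citation.

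One small point to tighten in your $(4)\Rightarrow(3)$: ``after the standard normalisation'' hides the fact that a primitive symmetric $f$ with discriminant $p^2$ satisfies $f^2-tf+n=0$ with $t^2-4n=p^2$, and you need to check that $(t-p)/2\in\ZZ$ so that $g=f-(t-p)/2$ is an honest endomorphism with $g^2=pg$; this is true (split on the parity of $p$), but deserves a sentence.
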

\begin{proof}
We have already proved the equivalence of (1), (2) and~(3) in
Proposition~\ref{cmplandroots} in a more general setting.
The equivalence of (4), (5) and (6) is a direct application of \cite[Section 4]{BW}.
Next, we will show that $(3)\implies(4)$ and $(6)\implies(1)$.

For the first implication, take $f$ to be the norm-endomorphism
associated to either elliptic curve. By \cite[Thm 5.3.4]{LB}, $f$ is
primitive, symmetric and has characteristic polynomial $f^2-pf$. So
its discriminant equals $p^2$.

For the second implication, to simplify notation, we write $t'=\Ima(t)$
for any $t\in\CC$. Then 
$$
\det(\Ima Z)=t_2'(pt_3'-t_2'),
$$
$$
H=(\Ima Z)^{-1}=\det (\Ima Z)^{-1}
\left[\begin{array}{cc}
t_3'&-t_2'\\
-t_2'&pt_2'\\
\end{array}\right].
$$
Define $\iota_E\colon s\mapsto (ps,s)$. Its analytic
representation is given by the matrix $\left[\begin{array}{c}
    p\\1\\
\end{array}\right].
$
Then $\phi_{\iota_E^*H}=\hat{\iota}_E\circ\phi_H\circ\iota_E$ is defined by 
\begin{equation*}
\left[\begin{array}{cc}
p&1 \\
\end{array}\right] 
\det (\Ima Z)^{-1}
\left[\begin{array}{cc}
t_3'&-t_2'\\
-t_2'&pt_2'\\
\end{array}\right]
\left[\begin{array}{c}
p\\
1\\
\end{array}\right] 
=\left[\begin{array}{cc}
p&1 \\
\end{array}\right] 
\left[\begin{array}{cc}
\frac{pt_3-t_2'}{t_2'(pt_3-t_2')}\\
0\\
\end{array}\right]
=p
\left[\begin{array}{c}
(t_2')^{-1}\\
\end{array}\right],
\end{equation*}
so $H|_E$ is of type $p$.
\end{proof}

Condition~(6) of Theorem~\ref{humb} implies that in $\cA_{2}$,
the locus of all principally polarised abelian surfaces satisfying the
above conditions is the image of the surface given by the equation
$t_1=pt_2$ in $\SH_2$, and therefore it
is an irreducible surface in $\cA_{2}$.

\begin{definition}
The locus in $\cA_{2}$ of all principally polarised abelian
surfaces that satisfy the conditions of Theorem \ref{humb} is called the Humbert
surface of discriminant $p^{2}$.
\end{definition}

Humbert showed more in~\cite{H}. He found the equations defining the
preimage in $\SH_2$ of all Humbert surfaces.  To be precise,
any 5-tuple of integers without common divisor $(a,b,c,d,e)$ with the
same discriminant $\Delta = b^2 - 4ac - 4de$ gives us the so-called
singular relation
$$at_1 + bt_2 + ct_3 + d(t_2^2 - t_1t_3) + e = 0$$
In other words, the period matrix $Z=\left[\begin{array}{cc}
t_1&t_2\\
t_2&t_3\\
\end{array}\right]\in\SH_2$ is a solution to a singular
relation with $\Delta=p^2$ if and only if the abelian surface
$A_Z=\CC^2/(Z\ZZ^2+\ZZ^2)$ contains an elliptic
curve with induced polarisation of type $p$. 

If we recall that $\cA_2=\SH_2/\Sp(4,\ZZ)$, then it means that all matrices 
which satisfy the singular equation for some $\Delta=p^2$ form a symplectic orbit.
Then condition 6 of Theorem \ref{humb} says that there always exists a normalised period matrix i.e.
such that $$a=-1,\  b=p,\  c=d=e=0.$$

\subsection{Generalised Humbert locus}

We would like to generalise the notion of Humbert surface to higher dimensions.  There
are a few immediate problems that arise. Firstly, Humbert surfaces are
divisors in $\cA_{2}$ globally defined by one equation in
$\SH_2$, whereas in higher dimensions that is not the
case. 
Secondly, all elliptic curves are essentially canonically principally
polarised whereas in higher dimensions polarisations are much richer.

If we consider an abelian subvariety of an abelian variety, then the two obvious discrete invariants are the dimension of the subvariety and the type of the induced polarisation. So we define
\begin{definition}
The \emph{generalised Humbert locus of type $D=(d_1,\ldots,d_k)$
in dimension $g$}, denoted by $\Is^g_{D}$, is the locus in
$\cA_g$ of principally polarised $g$-folds $X$ such that
there exists a $k$-dimensional subvariety $Z$ of $X$ such that 
the restriction of the polarisation from $X$ to $Z$ is of type
$D$. If $d_1=d_k$ then we say it is of principal type.
\end{definition}
\begin{remark}
The same definition was proposed by O.Debarre in \cite[p 259]{Deb}, denoted by $\mathscr{A}^{\delta}_{g',g-g'}$.
\end{remark}
Firstly some obvious remarks and connections with previously known notions:

\begin{enumerate}
\item Every non-simple principally polarised abelian variety belongs
  to a generalised Humbert locus for some $g$ and $D$.
\item The name comes from the fact that for surfaces, it gives back
  Humbert surfaces of discriminant $D^2$. The word $\Is$ is an abbreviation of (polarised) isogenous to a product.

\item Note that principal type does not mean $d_k=1$. If $d_k=1$ then
  the isogeny from Proposition~\ref{cmplandroots} is actually an
  isomorphism and we get only the locus of products of principally
  polarised abelian varieties.  If we restrict the generalised Humbert
  locus of principal type to Jacobians of smooth curves then we get
  Jacobians containing Prym-Tyurin varieties.

\end{enumerate}

The first example of generalised Humbert locus arises in dimension three.
\begin{prop}\label{propIs3n}
A variety $A\in\Is^3_{n}$ is either a product of an elliptic curve with an abelian surface or the Jacobian of a smooth genus
$3$ curve which is an $n:1$ cover of an elliptic curve branched in
$4$ points and all such Jacobians are contained in $\Is^3_{n}$. In
other words, the only non-simple Jacobians are Jacobians of covers of
elliptic curves.
\end{prop}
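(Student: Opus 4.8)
The plan is to combine the general structure theory of Section 2 with the classical theory of double covers branched in four points. Suppose $A\in\Is^3_n$, so by definition $A$ is a principally polarised abelian threefold containing a one-dimensional subvariety, i.e.\ an elliptic curve $E\subset A$ with $H|_E$ of type $n$. By Proposition~\ref{cmplandroots}, the complementary abelian subvariety $N=A^{1-\varepsilon_E}$ is an abelian surface with induced polarisation of complementary type $\tilde D=(1,n)$, and there is a polarised isogeny $\rho\colon E\times N\To A$ restricting to inclusions on each factor. If $n=1$ this isogeny is an isomorphism and $A\cong E\times N$ splits as a product (the first alternative). So assume $n>1$ and that $A$ does not split as such a product; then $A$ is an indecomposable principally polarised threefold, hence by the criterion for Jacobians in dimension $\le 3$ (every indecomposable principally polarised abelian variety of dimension $\le 3$ is the Jacobian of a smooth curve, by Hoyt/Oort–Ueno) we may write $A=JC$ for a smooth genus $3$ curve $C$.

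Next I would extract the cover. The inclusion $\iota_E\colon E\hookrightarrow JC$ composed with duality gives, via the principal polarisation, a surjection $\hat\iota_E\colon JC=\widehat{JC}\twoheadrightarrow\widehat E\cong E$ of abelian varieties. By the universal property of the Jacobian and Torelli, such a surjection of the Jacobian onto an elliptic curve that is compatible with polarisations comes from a morphism of curves $f\colon C\To E$, with $f^*\colon E=JE\To JC$ having image the subvariety $\iota_E(E)$ we started with; the induced polarisation $H|_{\iota_E(E)}=f^*H$ has type $n$ precisely when $f$ has degree $n$ (the pullback of the principal polarisation under a degree-$n$ cover is $n$ times the principal polarisation on $E$). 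Finally the branch locus: by Riemann–Hurwitz, $2\cdot 3-2 = n(2\cdot 1-2)+\deg R$, so the ramification divisor $R$ has degree $4$; if $C$ is to be smooth and the cover is to exist for generic data one checks the branching consists of $4$ simple branch points. Conversely, given any smooth genus $3$ curve $C$ that is an $n:1$ cover $f\colon C\To E$ branched at $4$ points, $f^*JE$ is an elliptic curve in $JC$ with induced polarisation $f^*H$ of type $n$ (again because $\deg f = n$), so $JC\in\Is^3_n$; this gives the "all such Jacobians are contained in $\Is^3_n$" clause.

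I expect the main obstacle to be the passage from the abelian-variety data $(A,E,H|_E)$ back to an actual morphism of curves $f\colon C\To E$ of the correct degree, and the verification that the branch locus has exactly four points with the cover having the stated ramification profile; this is where one really uses that we are in dimension $3$ (so that indecomposability forces $A=JC$) and the precise numerology of Riemann–Hurwitz. The product-versus-Jacobian dichotomy itself is immediate from Theorem~\ref{pcrthm} together with the fact that a non-split principally polarised threefold is a Jacobian. The direction "$JC\in\Is^3_n$ for every such cover" is the routine one: it is just Proposition~\ref{cmplandroots} applied to $M=f^*JE$, with the type computed from $\deg f=n$.
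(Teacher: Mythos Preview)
Your approach is essentially the paper's: the product/Jacobian dichotomy for principally polarised threefolds, then produce a map $C\to E$ from the elliptic curve sitting in $JC$, read off the degree from the polarisation type, and apply Riemann--Hurwitz; the converse via $f^*$ (equivalently the norm map) is the same as well.

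One correction on the step you yourself flagged as the main obstacle: invoking ``the universal property of the Jacobian and Torelli'' is the wrong tool and the wrong direction. The universal property runs from maps $C\to A$ to maps $JC\to A$, not the reverse, and Torelli plays no role here. The paper simply composes the Abel--Jacobi map $C\hookrightarrow JC$ with the dual of the inclusion $\hat\iota_E\circ\phi_H\colon JC\to E$; since the image of $C$ generates $JC$ and the projection to $E$ is surjective, the resulting map $C\to E$ is non-constant, hence a finite cover. That the degree is $n$ then follows because $\Nm_f\circ f^*$ is multiplication by $\deg f$ on $E$, and this composite is exactly $\phi_{\iota_E^*H}$ up to the identifications, which is multiplication by $n$ since $H|_E$ has type $n$. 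Your $n=1$ case split and the separate appeal to Hoyt/Oort--Ueno are harmless but unnecessary: the paper just cites the dichotomy once (as \cite[Cor~11.8.2]{LB}) and treats all $n$ uniformly.
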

\begin{proof}
By \cite[Cor 11.8.2]{LB} every  principally polarised abelian threefold is either a product or a Jacobian so we restrict our attention to Jacobians. If $JC\in\Is_n^3$,
then it contains an elliptic curve, say $E$. Taking the Abel-Jacobi map
composed with the dual of the inclusion, we get a map
$C\To E$. As $H|_E$ is of type $n$, it is an $n:1$ cover. Using the Hurwitz formula, we get that it has to be
branched in 4 points. Conversely, if we have an $n:1$ cover $\pi:C\To E$, 
then we can have the norm map $\Nm_\pi:JC\To JE=E$, given by
$\Nm_\pi(P-Q)=\pi(P)-\pi(Q)$.
Moreover $\Nm_\pi(\pi^{-1}(P'-Q'))=n(P'-Q')$ is a multiplication by $n$, so the induced polarisation is of type $n$.
\end{proof}

Before stating Proposition~\ref{propIs32}, Theorem \ref{genis32thm} and Corollary \ref{corIsJempty}, I
would like to note that the results are based on well-known ideas of Prym construction (see \cite{Mumfordprym} and \cite{LB}) and the generalised Torelli Theorem (see \cite{W}), but I was not able to find any exact
references in the literature. 

The first result is a characterisation of a $3$-dimensional family of
Jacobians of \'etale double covers of genus $2$ curves.

\begin{prop}\label{propIs32}
The locus of Jacobians of \'etale double covers of genus $2$ curves is $\Is^3_{2}\cap\mathcal{JH}$.
\end{prop}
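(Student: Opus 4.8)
The plan is to prove the two inclusions separately, using the Prym construction in one direction and the generalised Torelli theorem in the other.

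\textbf{From \'etale double covers to $\Is^3_2\cap\mathcal{JH}$.} First I would take an \'etale double cover $\pi\colon \tilde{C}\to C$ of a smooth genus $2$ curve $C$. By Hurwitz, $\tilde{C}$ has genus $3$. The pullback $\pi^*\colon JC\to J\tilde{C}$ has image an abelian surface $M$, and by the standard theory (see \cite{Mumfordprym}, \cite{LB}) the induced polarisation $H|_M$ equals twice the principal polarisation on $JC$, i.e.\ is of type $(2,2)=D$; hence $J\tilde{C}\in\Is^3_2$. The complementary abelian subvariety is the Prym variety, which for an \'etale double cover of a genus $2$ curve is an elliptic curve, and the induced polarisation on it is of type $\tilde D=(1,2)$, consistently with Proposition~\ref{cmplandroots}. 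To see $J\tilde{C}\in\mathcal{JH}$, I would observe that $\tilde C$ admits the hyperelliptic involution: since $C$ is genus $2$ it is hyperelliptic, and the deck involution $\sigma$ of the \'etale cover together with the (pullback of the) hyperelliptic involution of $C$ generate an action on $\tilde C$; a quick genus count via Hurwitz shows that the relevant involution on $\tilde C$ has quotient of genus $0$, so $\tilde C$ is hyperelliptic. Thus $J\tilde C\in\Is^3_2\cap\mathcal{JH}$.

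\textbf{From $\Is^3_2\cap\mathcal{JH}$ to \'etale double covers.} Conversely, let $J\tilde C\in\Is^3_2\cap\mathcal{JH}$ with $\tilde C$ smooth hyperelliptic of genus $3$. By definition there is an abelian surface $M\subset J\tilde C$ with $H|_M$ of type $(2,2)$; let $N$ be the complementary abelian subvariety, which by Proposition~\ref{cmplandroots} carries a polarisation of type $(1,2)$, so $N$ is an elliptic curve and the pair $(M,N)$ gives a polarised isogeny $M\times N\to J\tilde C$ whose kernel meets the factors trivially. The exponent computation shows $\Nm_M$ is, up to the factor $e(M)=2$, an idempotent, and $\frac12\Nm_M$ restricted to an appropriate sublattice produces a fixed-point-free involution on $\tilde C$ or on a closely related object. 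More concretely, since $M$ is a ppav of dimension $2$ it is itself a Jacobian $JC$ (or a product), and the surjection $J\tilde C\to M=JC$ dual to the inclusion, composed with Abel--Jacobi, yields a degree-$2$ map $\tilde C\to C$ because $H|_M$ is \emph{twice} the principal polarisation. The hyperelliptic hypothesis forces $C$ to have genus $2$ (if $C$ had genus $\le 1$ the map could not be compatible with the polarisation type, and the branch-point count must match): Hurwitz gives $4 = 2(2g_C-2)+2 + b$, and the condition that the induced polarisation be of principal type $(2,2)$ rather than something with a branched contribution forces $b=0$, i.e.\ $g_C=2$ and the cover is \'etale. The generalised Torelli theorem (see \cite{W}) is then what guarantees that the abstract isogeny data actually comes from a genuine morphism of curves, pinning down $\tilde C$ as \emph{the} \'etale double cover of $C$.

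\textbf{Main obstacle.} The delicate point is the branch-point bookkeeping in the converse direction: showing that membership in $\Is^3_2$ of principal type, together with hyperellipticity, forces the associated double cover $\tilde C\to C$ to be \emph{\'etale} (i.e.\ $C$ of genus $2$ and no branching), rather than, say, a double cover of an elliptic curve branched in $4$ points or of $\PP^1$ branched in $8$ points. This is exactly the distinction between the $(2,2)$ induced polarisation (which pins the cover to be \'etale over a genus $2$ base) and the $(1,2)$ or other types that would arise from branched covers, and it is the reason the hyperelliptic locus appears in the statement. I expect this to require either an explicit period-matrix argument along the lines of Theorem~\ref{humb}(6) (using the linear equations of the main theorem with $g=3$, $k=1$... no, rather with the $(2,2)$ data) or a careful invocation of how $\pi^*$ behaves for the three types of genus-$3$ double covers; the rest is routine application of Proposition~\ref{cmplandroots}, the norm-endomorphism formalism, and Torelli.
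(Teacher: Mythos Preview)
Your proposal rests on a misreading of the notation that propagates through both directions. In $\Is^3_2$ the subscript is the $1$-tuple $D=(2)$ (the constraint $k\le g/2$ forces $k=1$ when $g=3$), so the defining subvariety is an \emph{elliptic curve} with induced polarisation of type $(2)$, and the complementary subvariety is an abelian \emph{surface} of type $\tilde D=(1,2)$ --- not a surface of type $(2,2)$. Consequently your forward computation of types is wrong: for an \'etale double cover $\pi\colon\tilde C\to C$ the pullback $\pi^*$ has kernel $\{0,\eta\}$, so its image is $JC/\langle\eta\rangle$ with induced polarisation of type $(1,2)$, not $(2,2)$; the type $(2,\ldots,2)$ occurs only for \emph{branched} double covers. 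Membership $J\tilde C\in\Is^3_2$ then follows via Proposition~\ref{cmplandroots}, since the complementary elliptic curve (the Prym) carries the type-$(2)$ polarisation. Your assertion that the Prym, an elliptic curve, has polarisation type $(1,2)$ is incoherent: a $1$-dimensional variety cannot carry a type given by a $2$-tuple.

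The converse has a genuine gap that your ``main obstacle'' paragraph does not resolve. You want to say the $2$-dimensional subvariety $M$ is a Jacobian because it is principally polarised; but with the correct induced type $(1,2)$ it is \emph{not}, and a general $(1,2)$-polarised surface is not a Jacobian, so there is no direct way to extract a curve $C'$ and a map $\tilde C\to C'$ from $M$ alone. The paper's argument uses the hyperelliptic hypothesis in an essential, constructive way rather than merely for branch-point bookkeeping: membership in $\Is^3_2$ already gives (Proposition~\ref{propIs3n}) a $2{:}1$ cover $\tilde C\to E$ onto an elliptic curve, with covering involution $\iota_E$; since $\tilde C$ is hyperelliptic, the hyperelliptic involution $\iota$ extends to $-1$ on $J\tilde C$, and the composite $\iota_Z=\iota\circ\iota_E$ is a \emph{new} involution on $\tilde C$ whose associated idempotent is $1-\varepsilon_E$. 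The quotient $C'=\tilde C/\iota_Z$ then has $\dim JC'=\dim Z=2$, and Hurwitz forces the cover $\tilde C\to C'$ to be \'etale. Hyperellipticity is thus what manufactures the second involution; without it the converse genuinely fails, the non-hyperelliptic members of $\Is^3_2\cap\mathcal J$ being exactly the $4$-branched double covers of elliptic curves (cf.\ Theorem~\ref{genis32thm}).
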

\begin{proof}

Let $f\colon C\To C'$ be an \'etale double cover. It is defined
by a $2$-torsion point in $JC'$, say $\eta$. Then
$\ker(f^*)=\{0,\eta\}$ (\cite[Ex. B.14]{ACGH}). Therefore $f^*JC'=JC'/\langle\eta\rangle$ is a
$(1,2)$-polarised abelian surface, which is an abelian subvariety of
$JC$. Hence $JC\in\Is^3_2$. To finish the implication, let us note
that $C'$, being of genus $2$, has to be hyperelliptic and any \'etale
double cover of a hyperelliptic curve is hyperelliptic. This
implication can be easily deduced from the proof of part (a) of
\cite[Thm 7.1]{Mumfordprym}, or from~\cite{O}.

As for the other implication, let
$JC\in\Is^3_{2}\cap\mathcal{JH}$. Denote by $E$ an elliptic curve in
$JC$. Denote by $\iota_E$ the involution of $C$ which defines the
double cover and by $i_E$ its extension to $JC$. From construction,
$\image(1-i_E)=E$, and therefore $\epsilon_E=\frac{1-i_E}{2}$. On the
other hand, if we denote by $\iota$ the hyperelliptic involution on
$C$, then its extension to $JC$ is $(-1)$. This is because
for a branch point $Q$, we have $(P-Q)+(\iota(P)-Q)=0$, being the
principal divisor of a pullback of a meromorphic function on
$\PP^1$.  Now, $\iota\circ\iota_E$ is an automorphism on $C$
and its extension is $-i_E$.  Denoting by $Z=\image(1-(-i_E))$ and
$\epsilon_Z=\frac{1+i_E}{2}$, we immediately get that
$\epsilon_Z+\epsilon_E=1$ and so $(E,Z)$ is a pair of complementary
abelian subvarieties of $JC$.

Denote by $\iota_Z=\iota\circ\iota_E$ and let $C'=C/\iota_Z$ be the
quotient curve with the cover $f\colon C\To C'$ given by $P\mapsto
\{P,\iota_Z(P)\}$. Then $f^*(\{P,\iota_Z(P)\})=P+i_E(P)$, so
$Z=\image(f^*)$.

It is obvious that $\dim JC'=\dim Z=2$, so $C'$ is of genus $2$ and by
the Hurwitz Formula $f$ has to be an \'etale double cover.
\end{proof}
\begin{Cor}\label{remIs32}
Proposition~\ref{propIs32} says that a genus~$3$ curve is an \'etale double
cover of a genus~$2$ curve if and only if it is both hyperelliptic and
a double cover of an elliptic curve branched in~$4$ points.
\end{Cor}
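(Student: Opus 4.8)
The plan is to prove Corollary~\ref{remIs32} as a direct logical consequence of Proposition~\ref{propIs32}, simply by unwinding what membership in $\Is^3_2\cap\mathcal{JH}$ means on the level of curves. Since the corollary is an "if and only if" statement about a genus~$3$ curve $C$, I would translate each side into the language already developed: the left-hand side ("$C$ is an \'etale double cover of a genus~$2$ curve") is, by the Torelli theorem, equivalent to "$JC$ lies in the locus of Jacobians of \'etale double covers of genus~$2$ curves", which by Proposition~\ref{propIs32} equals $\Is^3_2\cap\mathcal{JH}$. So the content is really to check that the right-hand side ("$C$ is hyperelliptic and a double cover of an elliptic curve branched in $4$ points") is equivalent to $JC\in\Is^3_2\cap\mathcal{JH}$.

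For that equivalence I would argue as follows. The condition $JC\in\mathcal{JH}$ is, by Torelli again, exactly the condition that $C$ is hyperelliptic. It therefore remains to see that, for a hyperelliptic genus~$3$ curve $C$, we have $JC\in\Is^3_2$ if and only if $C$ is a double cover of an elliptic curve branched in $4$ points. This is precisely the $n=2$ case of Proposition~\ref{propIs3n}: that proposition says $JC\in\Is^3_n$ (for $JC$ a Jacobian, which is automatic here) holds iff $C$ is an $n:1$ cover of an elliptic curve branched in $4$ points. Specializing $n=2$ gives the claim. Hence the two descriptions of $C$ coincide, and the corollary follows by chaining these equivalences.

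The only point requiring a word of care is the direction of the argument: Proposition~\ref{propIs32} gives the set-theoretic equality of loci in $\cA_3$, and Proposition~\ref{propIs3n} gives the curve-theoretic characterisation of $\Is^3_n\cap\mathcal{J}$; combining them is formally an intersection of the two statements at $n=2$, together with the observation that "hyperelliptic" is the Torelli-translation of "$JC\in\mathcal{JH}$". I do not expect any genuine obstacle here — the corollary is essentially a restatement — so the proof is short. I would write:

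\begin{proof}
By the Torelli theorem, a genus~$3$ curve $C$ is hyperelliptic if and only if $JC\in\mathcal{JH}$, and $C$ is an \'etale double cover of a genus~$2$ curve if and only if $JC$ lies in the locus of Jacobians of such covers. By Proposition~\ref{propIs3n} applied with $n=2$, a hyperelliptic curve $C$ of genus~$3$ is a double cover of an elliptic curve branched in $4$ points if and only if $JC\in\Is^3_2$. Hence $C$ is simultaneously hyperelliptic and a double cover of an elliptic curve branched in $4$ points if and only if $JC\in\Is^3_2\cap\mathcal{JH}$, which by Proposition~\ref{propIs32} is exactly the locus of Jacobians of \'etale double covers of genus~$2$ curves. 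Applying Torelli once more, this is equivalent to $C$ being an \'etale double cover of a genus~$2$ curve.
\end{proof}
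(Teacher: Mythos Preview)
Your proof is correct and is essentially the same as the paper's, which simply reads ``Immediate from Proposition~\ref{propIs32}''. You have usefully made explicit the one extra ingredient the paper leaves implicit: to translate $JC\in\Is^3_2$ back to ``$C$ is a double cover of an elliptic curve branched in $4$ points'' one invokes Proposition~\ref{propIs3n} with $n=2$, while $JC\in\mathcal{JH}$ is by definition ``$C$ is hyperelliptic''.
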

\begin{proof}
Immediate from Proposition \ref{propIs32}
\end{proof}

Now, we would like to recall the tools which we use in the proof of Theorem \ref{genis32thm}.
\begin{theorem}[Generalised Torelli Theorem \cite{W}]\label{gentorthm}
Let $C$ be a smooth curve. Then
$$\Aut(C)=\begin{cases}\Aut(JC),\ \ \ \ \ \ \ \ \text{ if $C$ is hyperelliptic}\\
\Aut(JC)/(-1),\  \text{ if $C$ is not hyperelliptic}\end{cases}.$$
\end{theorem}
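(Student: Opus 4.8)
The plan is to reduce the Generalised Torelli Theorem to the classical Torelli theorem by analysing how automorphisms of $C$ act on $JC$ together with the principal polarisation $\Theta$. First I would recall that the classical Torelli theorem gives a short exact sequence relating $\Aut(C)$ to the group of automorphisms of the polarised abelian variety $(JC,\Theta)$: every automorphism of $C$ induces, via pullback on divisor classes of degree $0$, an automorphism of $JC$ preserving $\Theta$, and conversely every polarised automorphism of $(JC,\Theta)$ arises this way up to the sign $\pm 1$. The subtlety is exactly the sign: the hyperelliptic involution acts as $-1$ on $JC$ (as already observed in the proof of Proposition~\ref{propIs32}, using that $(P-Q)+(\iota(P)-Q)=0$ for a branch point $Q$), so when $C$ is hyperelliptic the map $\Aut(C)\to\Aut(JC,\Theta)$ already hits $-1$ and is an isomorphism, whereas when $C$ is not hyperelliptic the map $\Aut(C)\to\Aut(JC,\Theta)$ is injective with image of index $2$, the missing coset being $-\phi$ for $\phi$ in the image.

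The key steps, in order, are: (i) fix a base point and an Abel--Jacobi embedding $C\hookrightarrow JC$, so that $\Theta$ is identified up to translation with the image of $\mathrm{Sym}^{g-1}C$; (ii) show that any $\sigma\in\Aut(C)$ induces $\sigma^*\in\Aut(JC)$ fixing the class of $\Theta$, giving a homomorphism $\Aut(C)\to\Aut(JC,\Theta)$; (iii) show this homomorphism is injective — if $\sigma^*=\id$ on $JC$ then $\sigma$ fixes the image of $C$ pointwise (via the Abel--Jacobi map and its injectivity), hence $\sigma=\id$; (iv) invoke the classical Torelli theorem in its strong form to show the image has index $1$ or $2$, with the nontrivial coset represented by $-1$ precisely when $-1$ is \emph{not} already induced by an automorphism of $C$; (v) observe that $-1$ is induced by an automorphism of $C$ if and only if $C$ is hyperelliptic, with that automorphism being the hyperelliptic involution — the "only if" is the content of the computation with branch points, and the converse is the statement that for non-hyperelliptic $C$, $-1$ does not come from $\Aut(C)$ because any such automorphism would have to act as $-1$, which forces the canonical model to admit a linear involution with the wrong fixed-point behaviour. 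Then the two cases of the displayed formula read off directly.

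I expect the main obstacle to be step (iv)--(v), namely pinning down exactly when $-1 \in \Aut(JC,\Theta)$ lies in the image of $\Aut(C)$. The inclusion-of-index-$2$ part is the genuinely hard input and is precisely where one must quote the refined Torelli theorem (the statement that $\Aut(JC,\Theta)$ is generated by $\pm 1$ together with the automorphisms coming from $C$); the rest is bookkeeping. One must also be slightly careful about base points and translations: automorphisms of $C$ need not fix the chosen base point, so $\sigma^*$ is a priori only defined up to translation, and one has to either work with the canonically-defined action on $JC=\mathrm{Pic}^0(C)$ or absorb the translation, noting it does not affect membership in $\Aut(JC,\Theta)$ since translations by elements of $\Theta$'s stabiliser are accounted for. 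Since Theorem~\ref{gentorthm} is quoted from \cite{W}, in the paper this is cited rather than reproved, so for our purposes the proof is: cite \cite{W}, and remark that the hyperelliptic case of the dichotomy is explained by the branch-point computation already appearing above.
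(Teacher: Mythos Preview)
The paper does not prove Theorem~\ref{gentorthm}; it is stated with attribution to Weil~\cite{W} and used as a black box in the proof of Theorem~\ref{genis32thm}. You correctly identify this in your final sentence, so in that sense your proposal matches the paper exactly: the ``proof'' is a citation.

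The preceding sketch you give is a sound outline of the standard argument (functoriality of $C\mapsto JC$ on automorphisms, injectivity via Abel--Jacobi, the strong Torelli input that $\Aut(JC,\Theta)$ is generated by the image of $\Aut(C)$ together with $\pm 1$, and the characterisation of when $-1$ lies in the image), and it goes well beyond what the paper does. One small correction: in step~(iii) your injectivity argument as written is slightly too quick, since $\sigma^*=\id$ on $\Pic^0(C)$ only gives $\sigma(P)-P\sim$ constant, and one needs $g\ge 2$ (so that $C$ has no $g^1_1$) to conclude $\sigma(P)=P$; but this is routine and does not affect the overall strategy.
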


\begin{theorem}\cite[Prop 11.4.3 and Lem 12.3.1]{LB}
Let $f\colon C\To C'$ be a double cover of smooth curves.

If $f$ is \'etale then it is defined by a $2$-torsion point on $JC'$, say $\eta$, and $f^*$ factorises through $JC'/\eta$ which is embedded in $JC$. In this case the induced polarisation from $JC$ to $JC'/\eta$ is of type $(1,2,\ldots,2)$.

If $f$ is not \'etale then $f^*$ is injective and the induced polarisation from $JC$ to $JC'$ is twice the principal polarisation on $JC'$.
\end{theorem}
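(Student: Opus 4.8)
The plan is to reduce both assertions to two standard facts about the pullback of a double cover, together with a short computation, in the spirit of the classical Prym construction. I would start from the eigenbundle decomposition of the pushforward, $f_{*}\cO_{C}=\cO_{C'}\oplus L^{-1}$, where $L$ is the line bundle on $C'$ with $L^{\otimes 2}=\cO_{C'}(B)$ and $B$ the branch divisor; the cover is \'etale exactly when $B=0$, and in that case $L=:\eta$ is a non-trivial $2$-torsion point of $JC'$.

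First I would pin down $\ker f^{*}$. Because $\Nm_{f}\circ f^{*}=[2]_{JC'}$ for a degree-$2$ cover, $\ker f^{*}\subseteq JC'[2]$, so it is finite and $\image f^{*}$ has dimension $g(C')$. For a degree-$0$ line bundle $M$ on $C'$ the projection formula gives $f_{*}f^{*}M=M\oplus(M\otimes L^{-1})$, so $f^{*}M\cong\cO_{C}$ forces $h^{0}(M)>0$ or $h^{0}(M\otimes L^{-1})>0$; the former, with $\deg M=0$, yields $M\cong\cO_{C'}$, while the latter can occur only when $\deg L=0$, i.e.\ in the \'etale case, and then forces $M\cong\eta$ (conversely $f^{*}\eta$ is a degree-$0$ bundle with a section, hence trivial). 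Thus $\ker f^{*}=\langle\eta\rangle$ when $f$ is \'etale and $\ker f^{*}=0$ otherwise, which already gives the factorisation $f^{*}=j\circ\pi$ with $\pi\colon JC'\To JC'/\langle\eta\rangle$ and $j$ an embedding (respectively, the injectivity of $f^{*}$).

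Next I would compute the induced polarisation. Let $\Xi$ and $\Theta$ denote the principal polarisations on $JC$ and $JC'$. Using the identity $\widehat{f^{*}}=\Nm_{f}$ under the canonical identifications $JC\cong\widehat{JC}$, $JC'\cong\widehat{JC'}$, the formula $\phi_{\iota^{*}H}=\hat{\iota}\circ\phi_{H}\circ\iota$ applied to $\iota=f^{*}$ gives
\[
\phi_{(f^{*})^{*}\Xi}=\widehat{f^{*}}\circ\phi_{\Xi}\circ f^{*}=\Nm_{f}\circ f^{*}=[2]_{JC'}=\phi_{2\Theta},
\]
so $(f^{*})^{*}\Xi\equiv 2\Theta$. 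In the non-\'etale case $f^{*}$ is an isomorphism onto $\image f^{*}$, so the polarisation induced from $JC$ on $\image f^{*}\cong JC'$ is $2\Theta$, twice the principal polarisation. In the \'etale case $\pi^{*}(j^{*}\Xi)=(f^{*})^{*}\Xi\equiv 2\Theta$, so $j^{*}\Xi$ is the descent of $2\Theta$ along $\pi$; this descent exists because $\langle\eta\rangle\subset K(2\Theta)=JC'[2]$ is isotropic for $e^{2\Theta}$ (the Weil pairing is alternating), and the descent formula gives $K(j^{*}\Xi)\cong\langle\eta\rangle^{\perp}/\langle\eta\rangle$, a group of order $2^{2g(C')}/2^{2}$ carrying a nondegenerate symplectic form, hence isomorphic to $(\ZZ_{2})^{2(g(C')-1)}$. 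Since $\dim(JC'/\langle\eta\rangle)=g(C')$, this says exactly that $j^{*}\Xi$ has type $(1,2,\ldots,2)$.

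I expect the only genuine work to lie in the two inputs I would quote from \cite{LB}: the identity $\widehat{f^{*}}=\Nm_{f}$ for the dual of the pullback map, and the descent formula $K(\overline{L})\cong K^{\perp}/K$ for a polarisation descending along a quotient by an isotropic subgroup, together with the verification that $\langle\eta\rangle$ is isotropic for the Weil pairing on $JC'[2]$. The remaining steps — the kernel computation and the identification of the type from the order of $K(j^{*}\Xi)$ — are routine.
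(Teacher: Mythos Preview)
The paper does not give its own proof of this statement: it is stated as a theorem with the citation \cite[Prop 11.4.3 and Lem 12.3.1]{LB} and then used as a black box in the proofs of Theorem~\ref{genis32thm} and Corollary~\ref{corIsJempty}. So there is nothing in the paper to compare your argument against.

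That said, your sketch is correct and is essentially the standard argument one finds in the cited reference and in Mumford's treatment of the Prym construction. The kernel computation via the projection formula $f_{*}f^{*}M\cong M\oplus(M\otimes L^{-1})$ is the usual way to see $\ker f^{*}=\langle\eta\rangle$ or $0$, and the polarisation identity $(f^{*})^{*}\Xi\equiv 2\Theta$ from $\widehat{f^{*}}=\Nm_{f}$ and $\Nm_{f}\circ f^{*}=[2]$ is exactly Prop.~11.4.3 in \cite{LB}. Your descent step in the \'etale case is also the content of Lem.~12.3.1 there. One minor remark: to conclude the type is $(1,2,\ldots,2)$ you need not just $|K(j^{*}\Xi)|=2^{2(g(C')-1)}$ but that the group is $2$-elementary; this is immediate since $K(j^{*}\Xi)=\langle\eta\rangle^{\perp}/\langle\eta\rangle$ is a subquotient of $JC'[2]\cong(\ZZ/2)^{2g(C')}$, and you do say this, so the argument is complete.
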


Denote by $\mathcal{D}^g_{b}$ the locus of curves of genus $g$ which are double covers branched in $b$ points, and by $\mathcal{JD}^g_{b}$ the locus of their Jacobians.
Let $\mathcal{HD}^g_{b}$ be the locus of hyperelliptic curves of genus $g$ which are double covers branched in $b$ points, and $\mathcal{JHD}^g_{b}$ their Jacobians.
The following theorem is the generalisation of Proposition \ref{propIs32}.
\begin{theorem}\label{genis32thm}
Let $\underline{2}=(2,\ldots,2)$ be a $g$-tuple of $2$'s. Then for odd dimension we have:
$$\Is^{2g+1}_{\underline{2}}\cap\mathcal{J}=\mathcal{JD}^{2g+1}_{0}\cup\mathcal{JD}^{2g+1}_{4}$$ and restricting to the hyperelliptic curves we have:
$$\Is^{2g+1}_{\underline{2}}\cap\mathcal{JH}=\mathcal{JHD}^{2g+1}_{0}=\mathcal{JHD}^{2g+1}_{4}.$$
For even dimension we have:
$$\Is^{2g}_{\underline{2}}\cap\cJ=\mathcal{JD}^{2g}_2, \text{ and }\ 
\Is^{2g}_{\underline{2}}\cap\mathcal{JH}=\mathcal{JHD}^{2g}_2$$

\end{theorem}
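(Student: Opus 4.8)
The plan is to generalise both directions of Proposition~\ref{propIs32}, using the two structural theorems recalled just above (the classification of double covers and their pullbacks, and the Generalised Torelli Theorem) as the main engine. The easy direction is the inclusion ``$\supseteq$'': given a double cover $f\colon C\To C'$ of the relevant branch type, I would use the recalled theorem on double covers to see that $f^*$ embeds either $JC'/\eta$ (in the \'etale case $b=0$) or $JC'$ (in the ramified case) into $JC$ with induced polarisation of type $(1,2,\ldots,2)$ or $(2,\ldots,2)$ respectively; a dimension count via the Hurwitz formula pins down which $b$ produces an abelian subvariety of exactly half the ambient dimension. Concretely, for $C$ of genus $2g+1$ an \'etale double cover has $C'$ of genus $g+1$ and $JC'/\eta$ has dimension $g$, of type $\underline 2$; a double cover branched in $4$ points also has $C'$ of genus $g+1$ but now the polarisation on the image of $f^*$ is twice the principal one, i.e.\ again of type $\underline 2$ — so both $\mathcal{JD}^{2g+1}_0$ and $\mathcal{JD}^{2g+1}_4$ land in $\Is^{2g+1}_{\underline 2}$, while in the even case $C$ of genus $2g$ forces $C'$ of genus $g$ and Hurwitz forces $b=2$.

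The substantive direction is ``$\subseteq$'': starting from $JC\in\Is^{2g+1}_{\underline 2}\cap\mathcal J$ (or its even analogue), I must produce a curve $C'$ and a double cover $C\To C'$ realising the subvariety. Following the Prym-construction idea in the proof of Proposition~\ref{propIs32}, let $M\subset JC$ be the given half-dimensional subvariety with $H|_M$ of type $\underline 2$, and let $N$ be its complementary abelian subvariety; by Proposition~\ref{cmplandroots}, $\tilde{\underline 2}=(1,\ldots,1,2,\ldots,2)$, and the idempotent $\varepsilon_M$ satisfies $2\varepsilon_M\in\Endo(JC)$. The key point is that $i:=1-2\varepsilon_M=2\varepsilon_N-1$ is a \emph{symmetric} automorphism of $JC$ with $i^2=1$ (because $\varepsilon_M$ is idempotent and the polarisation type $\underline 2$ on $M$, together with $\tilde{\underline 2}$ on $N$, makes $2\varepsilon_M$ and $2\varepsilon_N$ honest endomorphisms squaring appropriately — this is exactly where the ``twice principal'' hypothesis is used). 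Then $i\in\Aut(JC)$ is an involution, and by the Generalised Torelli Theorem it comes from an automorphism $\iota_Z$ of $C$ — in the non-hyperelliptic case one must check $i\neq -1$, equivalently $M\neq 0$, so $\iota_Z$ (not $\iota_Z$ composed with nothing) is a genuine involution; then $C':=C/\iota_Z$ carries a double cover $f\colon C\To C'$ with $\image(f^*)$ equal to $N$ or $M$ accordingly. A dimension count ($\dim JC'=\dim\image f^*=g$ in the odd case, $=g$ in the even case) together with Hurwitz identifies the branch number as $0$ or $4$ (odd) or $2$ (even); and in the odd hyperelliptic case one further composes with the hyperelliptic involution, exactly as in Proposition~\ref{propIs32}, to show that the $b=0$ and $b=4$ loci coincide after restricting to $\mathcal{JH}$, giving the chain of equalities $\mathcal{JHD}^{2g+1}_0=\Is^{2g+1}_{\underline 2}\cap\mathcal{JH}=\mathcal{JHD}^{2g+1}_4$.

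I expect the main obstacle to be showing that the idempotent $\varepsilon_M$ produces an \emph{automorphism} $i$ of the integral lattice (not merely of $\Endo_\QQ$), and that this automorphism is an involution; this is precisely the role of the hypothesis that $H|_M$ is of type $\underline 2$ (so $e(M)=2$ and $\Nm_M=2\varepsilon_M$), and it is the step that fails for a general type $D$. One has to verify that $\Nm_M/2$ and $\Nm_N/2$ are interchanged by the involution $\varepsilon\mapsto 1-\varepsilon$ and sum to $1$, and that $i=1-2\varepsilon_M$ preserves $\Lambda_{JC}$; for this I would work with a period matrix in the normal form of Theorem~\ref{thmIsgd} (the equations $z_{ij}=2z_{(g+1-i)j}$ and $z_{ij}=0$), where the integral action of $i$ can be written down explicitly as a block matrix, much as the $2\times 2$ and $4\times 4$ matrices in condition~(5) of Theorem~\ref{humb}. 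The remaining subtlety is the hyperelliptic bookkeeping in the odd case: one must check that composing with the hyperelliptic involution exchanges the two possible choices of complementary subvariety and hence the two branch types, which requires knowing that the extension of the hyperelliptic involution to $JC$ is $-1$ (as used in Proposition~\ref{propIs32}) and that for hyperelliptic $C$ every relevant involution on $JC$ descends to $C$ by the first case of Theorem~\ref{gentorthm}. Beyond these points the argument is a routine combination of the cited results with Hurwitz's formula.
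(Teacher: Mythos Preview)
Your overall strategy matches the paper's proof almost exactly: the ``$\supseteq$'' direction via the Prym construction and the ``$\subseteq$'' direction by producing the involution $i=1-\Nm_M$ and descending it to $C$ via the Generalised Torelli Theorem, then reading off the branch number from Hurwitz. The hyperelliptic refinement you sketch (both $i_M$ and $i_N=-i_M$ descend, giving the equality of the $b=0$ and $b=4$ loci) is also exactly what the paper does.

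However, your ``$\supseteq$'' paragraph contains genuine arithmetic slips that would make the argument fail as written. For $C$ of genus $2g+1$ and an \'etale double cover, $C'$ has genus $g+1$ and hence $JC'/\eta$ has dimension $g+1$, not $g$; its induced polarisation is of type $(1,2,\ldots,2)$, not $\underline 2$. The conclusion $JC\in\Is^{2g+1}_{\underline 2}$ comes from the \emph{complementary} subvariety, which has dimension $g$ and type $\underline 2$ --- this is precisely how the paper argues. Likewise, for a double cover of $C$ branched in $4$ points Hurwitz gives $4g=2(2g(C')-2)+4$, so $g(C')=g$, not $g+1$; then $f^*JC'$ is $g$-dimensional with polarisation $\underline 2$, and this is the subvariety witnessing membership in $\Is^{2g+1}_{\underline 2}$. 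As stated, your two cases are internally inconsistent (a $(g+1)$-dimensional variety cannot carry a polarisation whose type is a $g$-tuple).

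Finally, the ``main obstacle'' you anticipate is not one: $\Nm_M=e(M)\varepsilon_M=2\varepsilon_M$ is by definition an honest endomorphism of $JC$, so $i=1-\Nm_M\in\End(JC)$ automatically, and $(1-\Nm_M)^2=1-2\Nm_M+\Nm_M^2=1$ is a two-line computation using $\Nm_M^2=e(M)\Nm_M=2\Nm_M$. There is no need to pass to period matrices or invoke Theorem~\ref{thmIsgd}; the paper does this purely algebraically. (Also, $i=-1$ corresponds to $\varepsilon_M=1$, i.e.\ $M=JC$, not $M=0$.)
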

\begin{proof}
The idea of the proof is that the inclusion $\supset$ comes from the Prym construction and $\subset$ comes from the generalised Torelli Theorem.

We will prove the odd dimension case in detail.
Let $C\in\mathcal{D}^{2g+1}_{0}\cup\mathcal{D}^{2g+1}_{4}$.
Let $f$ be the quotient map and $C'$ be the quotient curve.
If $C\in\mathcal{D}^{2g+1}_{4}$ then $C'$ is of genus $g$ and $f^*$ is injective, so $JC'$ is embedded in $JC$ with the induced polarisation of type $\underline{2}$, whereas for $C\in\mathcal{D}^{2g+1}_{0}$ we have that $f$ is \'etale defined by the two torsion point, say $\eta$, and $C'$ is of genus $g+1$, so $JC'/\eta$ is embedded in $JC$ with the induced polarisation of type $(1,2,\ldots,2)$. The complementary polarisation type is $\underline{2}$, so in both cases $JC\in\Is^{2g+1}_{\underline{2}}$.

Now, let $JC\in\Is^{2g+1}_{\underline{2}}$. 
Let $M$ be a subvariety of $JC$ with the
induced polarisation of type $\underline{2}$ and let $N$ be the complementary subvariety to $M$. Then $\Nm_M^2=2\Nm_M$ and
therefore $i_M=(1-\Nm_M)$ is an involution of $JC$, because
$$(1-\Nm_M)^2=1-2\Nm_M+\Nm_M^2=1.$$
As $\Nm_N=2-\Nm_M$, we get that $i_N=(1-\Nm_N)=-i_M$ is another involution.

By Theorem \ref{gentorthm} one of them comes from the involution on $C$.
Assume that $i_M$ is the extension of the involution $\iota_M\in\Aut(C)$.
Let $C'=C/\iota_M$ and let $f\colon C\To C'$ be the quotient map.
Then for $c\in C$ we have 
$$f^*f(c)=f^*([c,\iota_M(c)])=c+\iota_M(c)=\Nm_N(c).$$
As $f^*:JC'\To JC$ is finite, it means that the genus of $C'$ equals $$g(C')=\dim(JC')=\dim(N)=g+1.$$
From the Hurwitz formula we get that $2(2g+1)-2=2(2(g+1)-2)+b$, so $b=0$
and therefore $f$ is an \'etale double cover, so $JC\in\mathcal{JD}^{2g+1}_{0}$
Analogously, if $i_N$ is the extension of the involution on $C$ then $f$ is branched in 4 points, so $JC\in\mathcal{JD}^{2g+1}_{4}$.

If $C$ is a hyperelliptic curve, then both involutions $i_M$ and $i_N$ come from involutions on $C$, so
$\Is^{2g+1}_{\underline{2}}\cap\mathcal{JH}\subset\mathcal{JHD}^{2g+1}_{0}$ and 
$\Is^{2g+1}_{\underline{2}}\cap\mathcal{JH}\subset\mathcal{JHD}^{2g+1}_{4}$
 which gives the second part of the theorem.

In the even dimensional case, both $M$ and $N$ are of dimension $g$ and with the induced polarisation of type $\underline{2}$ and the Hurwitz formula gives $b=2$.

In particular when $C$ is not hyperelliptic and
$JC\in\Is^{2g}_{\underline{2}}$, then for a pair of complementary
subvarieties $(M,N)$ of type $\underline{2}$ exactly one of them is
the Jacobian $JC'$ of genus $g$ curve such that $C$ is a double cover
of $C'$ and the other is the Prym variety of the double cover.
If $C$ is hyperelliptic, then both subvarieties are Jacobians and
Pryms for each other.
\end{proof}

The idea of this proof leads to an interesting observation.
\begin{Cor}\label{corIsJempty}
Let $D=(1,\ldots,2)$ be a $g$-tuple with a positive number
of $1$'s and $2$'s. Then
\begin{enumerate}
\item $\Is^{2g}_{D}\cap\cJ=\emptyset$,
\item $\Is^{2g+1}_{D}\cap\cJ=\emptyset$.
\end{enumerate}
\end{Cor}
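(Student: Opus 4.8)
The plan is to argue exactly as in the proof of Theorem~\ref{genis32thm}, exploiting that a tuple $D$ of $1$'s and $2$'s with at least one $2$ has exponent $2$. Suppose for contradiction that $JC \in \Is^{2g}_D$ (the case $\Is^{2g+1}_D$ being handled in parallel), and pick a pair $(M,N)$ of complementary abelian subvarieties of $JC$ with $H|_M$ of type $D$ and $H|_N$ of type $\tilde D$. Since $D$ contains a $2$ we have $e(M)=2$, and likewise $e(N)=2$; hence $\Nm_M^2=2\Nm_M$, so $i_M:=1-\Nm_M$ is an involution, and it is a polarised automorphism of $JC$ because $\Nm_M$ is symmetric for the Rosati involution. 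From $\varepsilon_M+\varepsilon_N=1$ we get $i_N:=1-\Nm_N=-i_M$, and neither $i_M$ nor $-i_M$ equals $1$, as otherwise $M$ would be $0$ or all of $JC$.

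First I would apply the Generalised Torelli Theorem (Theorem~\ref{gentorthm}): one of $\pm i_M$ --- and both, if $C$ is hyperelliptic --- is the extension of an automorphism $\sigma$ of $C$, which is then an involution different from the identity and from the hyperelliptic involution. Setting $C'=C/\sigma$ and $f\colon C\To C'$, the computation $f^*f(c)=c+\sigma(c)$ from the proof of Theorem~\ref{genis32thm} identifies $\image f^*$ with $N$ (if $\sigma$ extends to $i_M$) or with $M$ (if $\sigma$ extends to $-i_M$); in particular $g(C')=\dim\image f^*$, since $f^*$ has finite kernel.

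Next I would feed this into the Hurwitz formula. If $\image f^*=M$, which has dimension $g$, then $2(2g)-2=2(2g-2)+b$ in the even case (forcing $b=2$) and $2(2g+1)-2=2(2g-2)+b$ in the odd case (forcing $b=4$); in either case $f$ is ramified, so by \cite[Prop 11.4.3 and Lem 12.3.1]{LB} the induced polarisation on $M\cong JC'$ is twice the principal one, i.e.\ of type $(2,\ldots,2)$, which is impossible since $D$ has a positive number of $1$'s. If $\image f^*=N$, then in the even case $\dim N=g$ and one gets the same contradiction, while in the odd case $\dim N=g+1$, so $2(2g+1)-2=2(2(g+1)-2)+b$ forces $b=0$; then $f$ is \'etale, the induced polarisation on $N$ has type $(1,2,\ldots,2)$ with exactly one $1$, contradicting that $\tilde D=(1,d_1,\ldots,d_g)$ has at least two $1$'s. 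Every case yields a contradiction, so both loci are empty.

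The hard part, such as it is, will be the bookkeeping: checking that $i_M$ is a polarised automorphism so that Torelli applies (routine, from symmetry of the norm-endomorphism), keeping track of which of $M,N$ equals $\image f^*$ and hence its dimension, and noting that in the hyperelliptic case either of the two available involutions of $C$ still produces a polarisation type with the wrong number of $1$'s. None of these is a genuine obstacle --- the corollary is a variant of Theorem~\ref{genis32thm} in which the Hurwitz count is used to pin down $b$ and thereby the induced type.
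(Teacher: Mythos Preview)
Your proposal is correct and follows essentially the same route as the paper's proof: produce the involutions $\pm i_M$ from $e(M)=e(N)=2$, invoke the generalised Torelli theorem to descend one of them to $C$, and then let the Hurwitz formula force the induced polarisation on $f^*JC'$ to have the wrong number of $1$'s. Your write-up is in fact slightly more careful than the paper's (you track which of $M,N$ equals $\image f^*$ and explicitly note why $i_M$ is polarised), but the argument is the same.
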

\begin{proof}
If either of those was non-empty, we would find a pair of
complementary subvarieties $(M,N)$ and an involution on $C$ which
induces one of the involutions $(1-\Nm_M)$ or $(1-\Nm_N)$. Taking the
quotient curve $C'$, with quotient map $f$, we would find that
$M=f^*(JC')$ or $N=f^*(JC')$.

In the first case both subvarieties are of dimension $g$,
so the Hurwitz formula tells us that $f$ has to be a double cover
branched in $2$ points, which means that the induced polarisation on
$f^*JC'$ is twice a principal polarisation, a contradiction.

In the second case the Hurwitz formula states that
$2(2g+1)-2 =2(2g(C')-2)+b$, where $b$ is the number of branch points, and gives two possibilities. If $g(C')=g+1$,
then $b=0$ and we have an \'etale double cover and from the Prym construction (\cite[Thm 12.3.3]{LB}) we get a contradiction. If $g(C')=g$, then $b=4$ and again we get a
contradiction because the induced polarisation on $N$ has to be of type $\underline{2}$ (\cite[Prop 11.4.3]{LB}).
\end{proof}

There is one more result related to $\Is^3_2$.
\begin{prop}
There is a $1$ to $1$ correspondence between the set of smooth genus~$3$
hyperelliptic curves (up to translation) on a general abelian surface
$A$ and the set of degree~$2$ polarised isogenies $A\To B$,
where $B$ is the Jacobian of a smooth genus~$2$ curve. In particular,
there are exactly three hyperelliptic curves in the linear system of a
$(1,2)$ polarising line bundle on a (very) general abelian surface.
\end{prop}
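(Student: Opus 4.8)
The plan is to read off both sides of the bijection from the locus $\Is^3_2$ and Proposition~\ref{propIs32}. Write $H_A$ for the $(1,2)$-polarisation of $A$; since $A$ is (very) general, $\NS(A)=\ZZ[H_A]$. First I would reduce the left-hand side to the hyperelliptic members of the pencil $|H_A|\cong\PP^1$: if $C\subset A$ is smooth of genus $3$ then adjunction gives $C^2=2g(C)-2=4$, so $\mathcal O_A(C)$ is a polarisation of Euler characteristic $2$, and being a positive multiple of $[H_A]$ it equals $[H_A]$, whence $C$ is a translate of a member of $|H_A|$. Moreover the inclusion $\iota\colon C\hookrightarrow A$ induces a surjection $\iota_*\colon JC\to A$ with $1$-dimensional kernel, whose identity component is an elliptic curve $E$; the complementary abelian surface $Z\subset JC$ is a copy of $\hat A$ carrying the induced polarisation of the same type as $\mathcal O_A(C)$, namely $(1,2)$, so $JC\in\Is^3_2$, and $JC\in\Is^3_2\cap\mathcal{JH}$ exactly when $C$ is hyperelliptic. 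In that case Proposition~\ref{propIs32} produces an \'etale double cover $p\colon C\to C'$ of a smooth genus-$2$ curve, classified by a nonzero $2$-torsion point $\eta\in JC'$, with $p^{*}JC'=JC'/\langle\eta\rangle=Z$ and complementary Prym elliptic curve $E$.

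Next I would describe the right-hand side combinatorially. A degree-$2$ polarised isogeny $f\colon A\to B$ onto a principally polarised abelian surface is the same datum as a subgroup $K\subset K(H_A)$ of order $2$ --- automatically maximal isotropic for $e^{H_A}$, since $K(H_A)\cong(\ZZ/2)^{2}$ is a symplectic $\ZZ$-module of rank $2$ --- together with $B=A/K$ and the descended principal polarisation. There are precisely three such $K$, and for very general $A$ each quotient $A/K$ is a simple principally polarised abelian surface, hence the Jacobian of a smooth genus-$2$ curve. Thus the right-hand set has exactly three elements (up to isomorphism of the target), and it remains to match it bijectively with the left-hand set.

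To match the two descriptions: starting from a hyperelliptic $C\subset A$ as above, I would dualise the composite $JC'\twoheadrightarrow JC'/\langle\eta\rangle\cong Z\subset JC$, using that $Z$ is the unique $(1,2)$-polarised abelian subvariety of $JC$ for very general $A$ and that $\widehat{JC'}\cong JC'$, to obtain a degree-$2$ polarised isogeny $A\to JC'$. Conversely, from $f\colon A\to JC'$ I would take the nonzero $2$-torsion point $\eta\in JC'$ generating the kernel of the dual isogeny, form the associated \'etale double cover $p\colon C\to C'$ with $g(C)=3$, and compose the Abel--Jacobi embedding $C\hookrightarrow JC$ with the quotient $JC\to JC/E$ dual to $Z\hookrightarrow JC$; this realises $C$ inside a surface canonically isomorphic to $A$ as a member of $|H_A|$ up to translation. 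Finally I would check that these two assignments are mutually inverse, using $\iota=\iota_*\circ(\text{Abel--Jacobi})$ together with the fact that $\ker\iota_*=E$, the subvariety $Z$, and the point $\eta$ are each recovered correctly. This gives the correspondence, and since the right-hand set has three elements so does the left-hand one; normalising the three translation classes inside $|H_A|$ yields the three hyperelliptic members of the $(1,2)$-linear system.

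The hard part will be this last step: tracking the types of all the polarisations through the chain of isogenies so as to verify that the two constructions are genuinely inverse --- in particular pinning down the copy of $\hat A$ sitting inside $JC$ and the order-$2$ kernels that intervene, which is precisely what keeps the count equal to $3$ rather than a larger multiple --- and checking that each of the three translation classes of hyperelliptic curves meets $|H_A|$ in a single member, so that "up to translation" and "in the linear system" give the same three curves.
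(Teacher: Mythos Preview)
Your approach is essentially the paper's: both directions run through $\Is^3_2\cap\mathcal{JH}$ via Proposition~\ref{propIs32}, and the count of three comes from the three nonzero elements (equivalently, order-$2$ subgroups) of $K(H_A)\cong(\ZZ/2)^2$. The one genuine difference is in the direction \emph{isogeny $\to$ curve}. The paper simply pulls back the theta divisor: given $\rho\colon A\to JC'$ of degree~$2$, the curve is $\rho^{-1}(\Theta)\subset A$, automatically a smooth genus-$3$ hyperelliptic \'etale double cover of $C'$ sitting inside $A$. You instead pass to the dual isogeny to extract $\eta\in JC'[2]$, build the abstract double cover $C\to C'$, and then try to re-embed $C$ into $A$ via $JC\to JC/E$. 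The pullback route is shorter and sidesteps exactly the ``hard part'' you flag: no identification of $JC/E$ with $A$ is needed (and indeed the kernel of $\iota_*\colon JC\to A$ need not be the connected subvariety $E$ on the nose, so that step would require an extra argument), because the curve already lives in $A$ by construction. Apart from this, your outline matches the paper.
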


\begin{proof}
Let $(JC',\Theta)$ be the Jacobian of a smooth genus~$2$ curve. Let
$\rho\colon A\To JC'$ be a degree~$2$ isogeny. Then
$\rho^{-1}(\Theta)$ is a genus~$3$ hyperelliptic curve on $A$, which is an
\'etale double cover of $C'$.

Conversely, let $C$ be a hyperelliptic genus~$3$ curve on $A$. Then
$\cO(C)$ is a $(1,2)$ polarising line bundle. From the universal
property of Jacobians there exists a surjective
map $f\colon JC\To A$. 
By \cite[Prop 4.3]{BL2}, $\hat{f}$ is an embedding of $\hat{A}$
with restricted polarisation of type $(1,2)$.  Therefore
$JC\in\Is^3_2$.  As $C$ is also hyperelliptic,
Proposition~\ref{propIs32} tells us that there exists an \'etale
double cover $C\To C'$. It is defined by a $2$-torsion point, say
$\eta$, and there is an embedding of $JC'/\langle\eta\rangle$ to $JC$ with the
restricted polarisation of type $(1,2)$. As $A$ is general, we have
$\hat{A}=JC'/\langle\eta\rangle$ and by dualising the quotient map, we obtain
a degree~$2$ polarised isogeny $A\To JC'$.

The last part follows from the fact that there are exactly three
non-zero $2$-torsion points in the kernel $K(\cO(C))$. A (very)
general surface means one for which the resulting principally
polarised abelian surface is the Jacobian of a smooth curve.
\end{proof}

\subsection{Irreducibility}\label{secirr}
The aim of this section is to show that $\Is^g_{D}$ is irreducible.
This will be an indication that the choice of definition is a good one.

In the proof of this fact we will use condition~(4) of
Proposition~\ref{cmplandroots}, so we define
\begin{definition}
Let $M,N,A$ be polarised abelian varieties. An \emph{allowed isogeny} is a
polarised isogeny $\rho\colon M\times N\To A$, such that its
kernel has $\{0\}$ intersection with $M\times\{0\}$ and $\{0\}\times
N$.
\end{definition}

\begin{definition}
Let $(M,H_M),(N,H_N)$ be polarised abelian varieties of type
$D$ and $\tilde{D}$. A subgroup $K\subset M\times N$
is called an \emph{allowed isotropic subgroup} if it is a maximal
 isotropic subgroup of $K(H_M\boxtimes
H_N)$, with respect to $e^{H_M\boxtimes H_N}$, such that $K\cap
K(H_M)=K\cap K(H_N)=\{0\}$. Note that every maximal isotropic subgroup of 
$K(H_M\boxtimes H_N)$ has order $(d_1\cdot\ldots\cdot d_k)^2$.
\end{definition}
Let us recall
\begin{prop}\cite[Cor 6.3.5]{LB}\label{prop635}
For an isogeny $\rho\colon Y\To X$ and $L\in\Pic(Y)$ the
following statements are equivalent
\begin{enumerate}
\item $L=\rho^*(L')$ for some $L'\in\Pic(X)$. 
\item $\ker(\rho)$ is an isotropic subgroup of $K(L)$ with respect to $e^L$. 
\end{enumerate}
\end{prop}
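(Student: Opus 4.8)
The plan is to work entirely with the Appell--Humbert description of line bundles on complex tori. Since $\rho$ is an isogeny one may identify universal covers and write $Y=\CC^g/\Lambda_Y$, $X=\CC^g/\Lambda_X$ with $\Lambda_Y\subseteq\Lambda_X$ a finite-index sublattice, $\rho$ induced by $\id_{\CC^g}$, and $\ker\rho=\Lambda_X/\Lambda_Y$ regarded as a subgroup of $Y$. By Appell--Humbert, $L$ is described by a pair $(H,\chi)$: a Hermitian form $H$ on $\CC^g$ with $E:=\Ima H$ integral on $\Lambda_Y$, together with a semicharacter $\chi\colon\Lambda_Y\To U(1)$ for $H$, that is $\chi(\lambda+\mu)=\chi(\lambda)\chi(\mu)\exp(\pi i E(\lambda,\mu))$, where $U(1)\subset\CC^*$ denotes the unit circle. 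I would first record two standard facts: (a) $\rho^*$ acts on Appell--Humbert data by restriction of the lattice, so $L=\rho^*L'$ for some $L'\in\Pic(X)$ precisely when $(H,\chi)$ is the restriction of a pair $(H,\chi')$ valid on $\Lambda_X$; and (b) $K(L)=\Lambda(L)/\Lambda_Y$ with $\Lambda(L)=\{v\in\CC^g:E(v,\Lambda_Y)\subseteq\ZZ\}$, while $e^L(v+\Lambda_Y,w+\Lambda_Y)=\exp(-2\pi i E(v,w))$. This reduces the whole statement to the single question: \emph{when does the pair $(H,\chi)$ given on $\Lambda_Y$ extend to a pair on $\Lambda_X$?} (I would carry this out for nondegenerate $L$, the case used in this paper; the computations are identical for degenerate $L$, only then $K(L)$ is infinite.)

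The implication $(1)\Rightarrow(2)$ is then immediate: if $L=\rho^*L'$ then $E$ is integral on $\Lambda_X$, so $\Lambda_X\subseteq\Lambda(L)$ (as $\Lambda_Y\subseteq\Lambda_X$), giving $\ker\rho\subseteq K(L)$, and for $\lambda,\mu\in\Lambda_X$ one has $e^L(\lambda,\mu)=\exp(-2\pi i E(\lambda,\mu))=1$, so $\ker\rho$ is isotropic. For $(2)\Rightarrow(1)$, assume $\ker\rho\subseteq K(L)$ and $e^L|_{\ker\rho}\equiv1$; the containment forces $E(\Lambda_X,\Lambda_Y)\subseteq\ZZ$ and the isotropy forces $\exp(-2\pi i E(\lambda,\mu))=1$ for all $\lambda,\mu\in\Lambda_X$, hence $E$ is integral on $\Lambda_X$. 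By Appell--Humbert on $X$ there is a semicharacter $\chi_0\colon\Lambda_X\To U(1)$ for $H$. Now $\chi$ and $\chi_0|_{\Lambda_Y}$ are two semicharacters for $H$ on $\Lambda_Y$, so $\psi:=\chi\cdot(\chi_0|_{\Lambda_Y})^{-1}$ is an ordinary character $\Lambda_Y\To U(1)$; since $U(1)$ is divisible it extends to a character $\widetilde\psi\colon\Lambda_X\To U(1)$, and $\chi':=\chi_0\cdot\widetilde\psi$ is a semicharacter of $\Lambda_X$ for $H$ restricting to $\chi$. Then $(H,\chi')$ defines $L'\in\Pic(X)$ with $\rho^*L'=L$.

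The one delicate point is the semicharacter extension in $(2)\Rightarrow(1)$: one cannot extend $\chi$ directly using divisibility of $U(1)$, since a semicharacter is not a homomorphism, so the trick is to correct $\chi$ by a reference semicharacter on the larger lattice and reduce to extending an honest character --- and the existence of that reference semicharacter is exactly what integrality of $E$ on $\Lambda_X$, i.e. the isotropy hypothesis, buys. Everything else is bookkeeping: expressing $K(L)$ and $e^L$ in lattice terms, and checking that pullback along an isogeny is restriction of Appell--Humbert data. A more conceptual route would use Mumford's theta group $\mathcal{G}(L)$: a descent of $L$ along $Y\To Y/\ker\rho=X$ is the same as a lift of $\ker\rho$ to a level subgroup of $\mathcal{G}(L)$, and such a lift exists iff the preimage of $\ker\rho$ in $\mathcal{G}(L)$ is abelian (equivalently $e^L|_{\ker\rho}\equiv1$) and the resulting central extension by $\CC^*$ splits, which is automatic since $\CC^*$ is divisible.
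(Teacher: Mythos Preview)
The paper does not give its own proof of this proposition: it is simply quoted as \cite[Cor~6.3.5]{LB} and used as a black box, so there is nothing here to compare against. Your argument, however, is a correct and complete proof of the statement.

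Your approach via Appell--Humbert data is the standard one (and is essentially how the result is proved in \cite{LB}). The key steps are all in order: the identification of $\ker\rho$ with $\Lambda_X/\Lambda_Y$, the description $K(L)=\Lambda(L)/\Lambda_Y$ with $e^L$ given by $\exp(-2\pi iE(\cdot,\cdot))$, and the observation that integrality of $E$ on $\Lambda_X$ is exactly equivalent to $\ker\rho$ being isotropic in $K(L)$. The extension of the semicharacter in $(2)\Rightarrow(1)$ is handled correctly: you cannot extend $\chi$ directly, but once $E$ is integral on $\Lambda_X$ you may choose a reference semicharacter $\chi_0$ on $\Lambda_X$, reduce to extending the honest character $\chi\cdot(\chi_0|_{\Lambda_Y})^{-1}$, and invoke divisibility of $U(1)$. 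The alternative sketch via the theta group $\mathcal{G}(L)$ is also accurate and is perhaps the more conceptual way to see why isotropy is precisely the obstruction to descent.
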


This leads to an obvious corollary.
\begin{Cor}
Let $A$ be a principally polarised abelian variety. Let $M,N$ be
polarised abelian varieties of type $D$ and
$\tilde{D}$. Then
\begin{enumerate}
\item If $\rho\colon M\times N\To A$ is an allowed isogeny
  then $\ker(\rho)$ is an allowed subgroup of $M\times N$.
\item If $K$ is an allowed subgroup of $M\times N$, then $(M\times
  N)/K$ is a principally polarised abelian variety and the quotient
  map $\rho\colon M\times N\To (M\times N)/K$ is an
  allowed isogeny.
\end{enumerate}
\end{Cor}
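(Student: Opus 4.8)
The plan is to deduce both statements from Proposition~\ref{prop635} together with a degree count. Throughout write $\delta=d_1\cdots d_k$. Since $M$ has type $D=(d_1,\ldots,d_k)$ we have $\chi(H_M)=h^0(H_M)=\delta$, and since $N$ has type $\tilde D=(1,\ldots,1,d_1,\ldots,d_k)$ we likewise get $\chi(H_N)=\delta$; hence $\chi(H_M\boxtimes H_N)=\delta^2$, so $|K(H_M\boxtimes H_N)|=\delta^4$ and a maximal isotropic subgroup of $(K(H_M\boxtimes H_N),e^{H_M\boxtimes H_N})$ has order exactly $\delta^2$ — the order already recorded in the definition of allowed isotropic subgroup. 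I will also use that $K(H_M\boxtimes H_N)=K(H_M)\times K(H_N)$ inside $M\times N$, so that for a subgroup $K\subseteq K(H_M\boxtimes H_N)$ the two conditions $K\cap(M\times\{0\})=K\cap(\{0\}\times N)=\{0\}$ and $K\cap K(H_M)=K\cap K(H_N)=\{0\}$ say the same thing.

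For part (1): by definition an allowed isogeny $\rho\colon M\times N\To A$ is polarised, i.e.\ $\rho^*H=H_M\boxtimes H_N$, where $H$ is the principal polarisation of $A$. From $\chi(\rho^*H)=\deg(\rho)\,\chi(H)=\deg(\rho)$ we read off $|\ker\rho|=\deg(\rho)=\delta^2$. By Proposition~\ref{prop635}, implication $(1)\Rightarrow(2)$, $\ker\rho$ is isotropic in $K(H_M\boxtimes H_N)$ with respect to $e^{H_M\boxtimes H_N}$; having order $\delta^2$ it is maximal isotropic. Finally $\ker\rho\subseteq K(\rho^*H)=K(H_M\boxtimes H_N)$, so the kernel condition in the definition of allowed isogeny is precisely $\ker\rho\cap K(H_M)=\ker\rho\cap K(H_N)=\{0\}$, and $\ker\rho$ is an allowed isotropic subgroup.

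For part (2): let $K$ be an allowed isotropic subgroup and $\rho\colon M\times N\To(M\times N)/K$ the quotient map, an isogeny of degree $|K|=\delta^2$. Since $K=\ker\rho$ is isotropic in $K(H_M\boxtimes H_N)$, Proposition~\ref{prop635}, implication $(2)\Rightarrow(1)$, yields $H\in\Pic((M\times N)/K)$ with $\rho^*H=H_M\boxtimes H_N$. Because $\rho$ is an isogeny its analytic representation is a $\CC$-linear isomorphism, so the Hermitian form attached to $H$ agrees with that attached to $H_M\boxtimes H_N$ and is in particular positive definite; hence $H$ is a polarisation. Its type follows from $\chi(H)=\chi(\rho^*H)/\deg(\rho)=\delta^2/\delta^2=1$, so $H$ is a principal polarisation. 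Thus $\rho$ is a polarised isogeny, and exactly as in part (1) the hypothesis $K\cap K(H_M)=K\cap K(H_N)=\{0\}$ is the kernel condition making $\rho$ an allowed isogeny.

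The one point needing a little care — and what I would flag as the main obstacle — is checking in part (2) that the line bundle $H$ furnished by Proposition~\ref{prop635} is genuinely a \emph{polarisation} (positive definite) and is \emph{principal}, rather than merely an element of $\Pic$ pulling back correctly; both follow, as indicated, from $\rho$ being \'etale on tangent spaces and from the multiplicativity of $\chi$ under isogeny, but it is worth making explicit so that the passage ``allowed isotropic subgroup $\Rightarrow$ principally polarised quotient'' is not left implicit.
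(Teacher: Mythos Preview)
Your proof is correct and is exactly the argument the paper has in mind: the paper does not give a proof at all, labelling the statement an ``obvious corollary'' of Proposition~\ref{prop635}, and what you have written is the natural unpacking of that word ``obvious'' --- Proposition~\ref{prop635} for the isotropy, the degree/$\chi$ computation for maximality and principality, and the identification $K(H_M\boxtimes H_N)=K(H_M)\times K(H_N)$ to match the two kernel conditions. There is nothing to add.
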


Let us state the main result of this section.
\begin{prop}\label{irrofis}
Let $k,g$ be integers such that
$0<k\leq \frac{g}{2}$, and $D=(d_1,\ldots, d_k)$ 
be a polarisation type.
Then $\Is^g_{D}$ is an irreducible subvariety of
  $\cA_g$ of dimension $(^{k+1}_{\ 2})+(^{g-k+1}_{\ \ \ 2\ })$ and codimension $k(g-k)$. 
\end{prop}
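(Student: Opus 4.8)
The plan is to build $\Is^g_D$ as the image of an irreducible family of allowed isogenies, mimicking the explicit approach announced in the introduction. First I would fix a decomposition for the polarisations: write $D=(d_1,\dots,d_k)$ and its complementary type $\tilde D=(1,\dots,1,d_1,\dots,d_k)$, and consider the product $\cA_D\times\cA_{\tilde D}$ of the two moduli spaces of polarised abelian varieties. Each of these is irreducible (it is a quotient of a Siegel-type domain: $\cA_D$ is a quotient of $\SH_k$ and $\cA_{\tilde D}$ of $\SH_{g-k}$), so the product is irreducible of dimension $\binom{k+1}{2}+\binom{g-k+1}{2}$. Over a point $((M,H_M),(N,H_N))$ of this product, Proposition~\ref{cmplandroots}(4) together with its Corollary tells us that the principally polarised $g$-folds in $\Is^g_D$ coming from this pair are exactly the quotients $(M\times N)/K$ where $K$ runs over the allowed isotropic subgroups of $K(H_M\boxtimes H_N)$. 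The key structural input is Proposition~\ref{maxisosub2}: every allowed isotropic subgroup is the graph of an antisymplectic map $K(H_M)\to K(H_N)$ (after identifying $K(H_M)$ and $K(H_N)$ as symplectic $\ZZ$-modules of the same type, which is precisely why $\tilde D$ was defined the way it was), and by Proposition~\ref{maxisosub} any two such are related by the symplectic group action. Hence the set of allowed subgroups is a single orbit, so it is irreducible (a homogeneous space under a connected-enough group, or more cheaply: it is a quotient of a point-set that maps onto $\cA_g$ with connected image).

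Concretely I would assemble a single irreducible parameter space $\mathcal{P}$: points are triples $((M,H_M),(N,H_N),K)$ with $K$ allowed, together with a choice of symplectic basis realising $K$ as a standard graph, so that $\mathcal{P}$ fibres over $\cA_D\times\cA_{\tilde D}$ with fibre the (irreducible, since transitive under $\Sp$) space of antisymplectic maps. There is a natural morphism $\Phi\colon\mathcal{P}\to\cA_g$ sending a triple to the principally polarised variety $(M\times N)/K$; its image is exactly $\Is^g_D$ by Proposition~\ref{cmplandroots} (every member of $\Is^g_D$ arises this way, taking $M$ the subvariety of type $D$ and $N$ its complement). Since $\mathcal{P}$ is irreducible and $\Phi$ is a morphism, the image $\Is^g_D$ is irreducible. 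For the dimension count, I would observe that a general $(A,H)\in\Is^g_D$ has $M$ and $N$ as its \emph{only} abelian subvarieties of the relevant types (by Poincaré complete reducibility, Theorem~\ref{pcrthm}, for $(M,N)$ general simple and non-isogenous the decomposition is unique up to permutation), so the rational map $\Is^g_D\dashrightarrow\cA_D\times\cA_{\tilde D}$ recovering $(M,N)$ is generically finite; since the antisymplectic-map fibre is a finite set (it is a torsor under the finite group $\Sp$ of a finite module), $\dim\Is^g_D=\dim(\cA_D\times\cA_{\tilde D})=\binom{k+1}{2}+\binom{g-k+1}{2}$. The codimension in $\cA_g$ is then $\binom{g+1}{2}-\binom{k+1}{2}-\binom{g-k+1}{2}=k(g-k)$, which is a purely arithmetic simplification.

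The main obstacle I anticipate is not the irreducibility itself but making the generic-finiteness argument for the dimension rigorous: one must rule out the possibility that a general $A\in\Is^g_D$ carries a positive-dimensional family of subvarieties of type $D$ (which would drop the dimension estimate) and simultaneously that the map $\Phi$ is not dominant onto something of smaller dimension. I would handle this by a dimension-of-fibres argument: show $\Phi$ has finite generic fibre by exhibiting one point $(A,H)$ in $\Is^g_D$ with $\Endo_\QQ(A)$ as small as possible — e.g. take $M,N$ general simple abelian varieties of the required polarisation types that are not isogenous to each other, so that $\Endo_\QQ(M\times N)=\Endo_\QQ(M)\times\Endo_\QQ(N)$ and the only symmetric idempotents are $0,1,\varepsilon_M,\varepsilon_N$; then $M$ and $N$ are the only candidate subvarieties and the fibre of $\Phi$ over $(M\times N)/K$ is finite. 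Combined with upper-semicontinuity of fibre dimension, this pins down $\dim\Is^g_D$. Everything else — irreducibility of $\cA_D$, $\cA_{\tilde D}$, transitivity of the $\Sp$-action on allowed subgroups (Proposition~\ref{maxisosub2}), and the passage from allowed isotropic subgroups to principally polarised quotients (the Corollary to Proposition~\ref{prop635}) — is already in hand from the earlier part of the paper, so the write-up reduces to organising these inputs and doing the binomial-coefficient bookkeeping.
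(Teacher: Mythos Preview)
Your approach is essentially the paper's: both use Proposition~\ref{maxisosub2} (transitivity of the symplectic action on allowed isotropic subgroups) to realise $\Is^g_D$ as the image of a single irreducible parameter space. The paper's execution is slightly cleaner than yours: rather than assembling a space of triples $((M,H_M),(N,H_N),K)$ and arguing it is irreducible, it fixes \emph{one} allowed subgroup $K$ (given by an explicit matrix) and defines a map $\Psi\colon\SH_k\times\SH_{g-k}\to\cA_g$ sending $(Z_k,Z_{g-k})$ to $(A_{Z_k}\times A_{Z_{g-k}})/K$; the point is that varying over all of $\SH_k\times\SH_{g-k}$ already varies the symplectic basis, and hence (by transitivity) hits every allowed subgroup, so the image is all of $\Is^g_D$ and irreducibility of the domain is manifest. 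This sidesteps the one soft spot in your write-up, namely the parenthetical ``irreducible, since transitive under $\Sp$'': the fibre you describe is a \emph{finite} set (a torsor under a finite group), so transitivity does not make it irreducible as a variety, and a fibration over an irreducible base with finite fibres need not have irreducible total space without further input. Your phrase ``together with a choice of symplectic basis realising $K$ as a standard graph'' is in fact exactly the fix --- it amounts to passing to $\SH_k\times\SH_{g-k}$ --- but you should make that the definition of $\mathcal{P}$ rather than a side remark. On the other hand, your generic-finiteness argument for the dimension (taking $M,N$ general simple and non-isogenous so that $\Endo_\QQ(A)=\QQ\times\QQ$ has only four symmetric idempotents) is more thorough than the paper, which simply reads off the dimension of the domain without justifying that $\Psi$ is generically finite.
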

\begin{proof}
Proposition~\ref{cmplandroots} tells us that $A$ belongs to
$\Is^g_{D}$ if and only if there exists an allowed isogeny
to $A$.  Therefore the idea of the proof is to show that there exists
one map from $\SH_k\times\SH_{g-k}$ which covers all
possible allowed isogenies and so $\Is^g_{D}$ is the image
of an irreducible variety.

The sketch of the proof is as follows. Take polarised abelian varieties $(M,
H_M)$ and $(N,H_N)$ of types $D$ and $\tilde{D}$
respectively. Take their product with product polarisation $(M\times
N,H_M\boxtimes H_N)$.  By \cite[Lem 3.1.4]{LB}, we have
$K(H_M)\cong K(H_N)$ and $K(H_M\boxtimes H_N)$, of order $\prod d_i^4$,
is
a symplectic $\ZZ$-module with the non-degenerate symplectic
form $e^{H_M\boxtimes H_N}$. Therefore there exists an allowed
isotropic subgroup $G\subset K(H_M\boxtimes H_N)$ and by Proposition~\ref{maxisosub2} all such are equivalent under the
action of the symplectic group. Hence there exists an allowed isogeny
$\rho\colon M\times N\To (M\times N)/G$ and so
$\Is^g_{D}$ is non-empty.  Moreover, the action of the
symplectic group on $K(H_M\boxtimes H_N)$ is induced by the
symplectic action on $\SH_g$, which gives us irreducibility.

To make this more precise, we need to recall that a period matrix of
an abelian variety is a choice of symplectic basis of a lattice in its
universal cover.

Let $l\leq k$ be the number of integers bigger than $1$ in
$D$. By the elementary divisor theorem, let
$B^M=\{\lambda_1^M,\ldots,\lambda_l^M,\mu_1^M,\ldots,\mu_l^M\}$ and $B^N=\{\lambda_1^N,\ldots,\lambda_l^N,\mu_1^N,\ldots,\mu_l^N\}$ be
symplectic bases of $K(H_M)$ and $K(H_N)$. Then $B=\{B^M,B^N\}$ is a
symplectic basis of $K(H_M\boxtimes H_N)$.
Let $K_B$ be given by the image (that is, the group generated by the columns) of the matrix 
$$K=\left[\begin{array}{cc}
    \id_l&0\\ 0&\id_l\\ \id_l&0\\ 0&-\id_l
\end{array}\right].$$
Then $K_B$ is an allowed isotropic subgroup. Moreover, if we change
bases using the symplectic action, then $\image(K)$ will always define an
allowed isotropic subgroup and by Proposition~\ref{maxisosub2}, every allowed isotropic subgroup arises in this way.

When we take the universal cover $V$ of $M\times N$, in order to write
the period matrix we need to choose a symplectic basis of $V$. The
obvious choice is to enlarge the symplectic basis $B$ to a symplectic
basis $\overline{B}$. We need to enlarge the matrix $K$ by zero-blocks
to a matrix $\overline{K}$, such that its image is equal to $K_B$.

From this discussion, we have found
$$
\Lambda= \left<\begin{array}{cccc}Z(M)&0&\diag(D)&0\\
0&Z(N)&0&\diag(\tilde{D})
\end{array}\right>,\  M\times N=\CC^{g}/\Lambda,
$$
and a matrix $\overline{K}$, such that $\image(\overline{K})$ is an
allowed isotropic subgroup of $M\times N$.

The data defining $\overline{K}$ is discrete, so
$\image(\overline{K})$ will be allowed isotropic for any matrices
$Z_k\in\SH_k,Z_{g-k}\in\SH_{g-k}$.  Moreover, the
symplectic action on $\SH_k\times\SH_{g-k}$ gives
all possible period matrices hence all possible symplectic bases and
therefore all possible allowed isotropic subgroups.

Thus we have proved that there exists a global map
$$
\Psi\colon\SH_{k}\times\SH_{g-k}\ni Z_k\times
Z_{g-k}\mapsto (A_{Z_k}\times A_{Z_{g-k}})/\image(\overline{K})\in
\cA_g,
$$ 
which covers all possible allowed isogenies; that is, for any allowed isogeny $M\times N\To A$, there exist period matrices
$Z(M)$ and $Z(N)$ such that $\Psi(Z(M)\times Z(N))=A$.

From the construction, it is obvious that $\Is^g_{D}$ is the
image of the above map and as the domain is irreducible, it follows that
$\Is^g_{D}$ is an irreducible variety.
\end{proof}
\begin{remark}
Proposition \ref{irrofis} is stated as a fact in \cite[(9.2)]{Deb} and proved using irreducibility of some moduli space. Both constructions are similar, but the proof presented in this paper is explicit.
\end{remark}

There is a generalisation of Humbert surfaces to the moduli of
non-principally polarised abelian surfaces. However, in that case, the
generalised Humbert surface is no longer irreducible. For details, see
\cite{V}.

One can also generalise further Proposition \ref{irrofis} to 
non-principally polarised abelian varieties. 
Let $D$ be a polarisation type of an abelian $g$-fold.
The idea is to define for
any polarisation types $D_1\in\ZZ^k$, $D_2\in\ZZ^{g-k}$ the locus
$\Is^{g,D}_{D_1,D_2}$ of $D$-polarised abelian $g$-folds
which have a pair of complementary subvarieties of types
$D_1$ and $D_2$. The obvious question is
whether $\Is^{g,D}_{D_1,D_2}$ is non-empty. Using
Proposition~\ref{prop635} one can translate the question into one
about the
existence of isotropic subgroups analogous to the allowed ones. The
proof of Proposition~\ref{irrofis} can be easily generalised, but one must
have in mind that the number of irreducible components of
$\Is^{g,D}_{D_1,D_2}$ will be equal to the number
of orbits of such isotropic subgroups. To sum up, the problem can be
solved if one can deal with the combinatorics related to special
isotropic subgroups in finite symplectic groups. Certainly this is
possible in many cases, such as $(1,p)$-polarised surfaces (see
\cite{V}).

\section{Equations in the Siegel space}\label{sec4}
As in the Humbert surface case, we would like to find equations for a
locus in $\SH_g$ which maps to $\Is^g_{D}$ in $\cA_g$. Ideally, we would like to find the equations of the whole preimage of $\Is^g_{D}$ which would involve understanding the action of $\Sp(2g,\ZZ)$ on $\SH_g$ and finding good symplectic invariants. 

We start by proving an obvious, yet important lemma.
\begin{lemma}\label{lsublattice}
$(A,H)=(\CC^g/\Lambda,H)$ is non-simple if and only if there exists a $g>k>0$ dimensional complex subspace $V$ such that $\Lambda\cap V$ is symplectic of rank $2k$.
\end{lemma}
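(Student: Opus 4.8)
The plan is to unwind the definition of non-simplicity via Poincaré's Complete Reducibility Theorem and the dictionary between abelian subvarieties and symmetric idempotents already recalled in the preliminaries, and then to reinterpret that dictionary at the level of the lattice $\Lambda$ inside the universal cover $\CC^g$. Recall that $A=\CC^g/\Lambda$ is non-simple precisely when it admits a nontrivial abelian subvariety $M$ with $0<\dim M=k<g$; by Theorem~\ref{prthm} and Definition~\ref{comabsub} we may equivalently ask for a pair $(M,N)$ of complementary abelian subvarieties. So the content of the lemma is to match ``abelian subvariety of dimension $k$'' with ``complex subspace $V$ of dimension $k$ whose intersection with $\Lambda$ is a symplectic sublattice of rank $2k$''.

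First I would prove the forward implication. Given an abelian subvariety $M\subset A$ of dimension $k$, its universal cover is a $k$-dimensional complex subspace $V\subset\CC^g$, and $\Lambda_M=\Lambda\cap V$ is a full-rank lattice in $V$, hence of rank $2k$. The point to check is that $\Lambda\cap V$ is symplectic, i.e. that the restriction $E|_{\Lambda\cap V}$ of the imaginary part $E=\Ima H$ of the principal polarisation is a nondegenerate alternating form with values in $\ZZ$ (equivalently, that it is a symplectic $\ZZ$-module in the sense of Section~\ref{ssanty}). Integrality is immediate since $E$ is integral on $\Lambda$; the restricted form is alternating because $E$ is; and nondegeneracy is exactly the statement that $H|_M=\iota^*H$ is a polarisation on $M$, which is the standard fact recalled at the start of the discussion of complementary abelian subvarieties (and in Definition~\ref{comabsub}). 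That the rank of the sublattice is $2k$ rather than something smaller is forced by $M$ being an abelian variety of dimension $k$, i.e. by $V/\Lambda_M$ being compact.

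For the converse, suppose $V\subset\CC^g$ is a complex subspace of dimension $k$ with $0<k<g$ such that $\Lambda\cap V$ has rank $2k$ and $E|_{\Lambda\cap V}$ is nondegenerate. Then $V/(\Lambda\cap V)$ is a complex torus $M$ of dimension $k$, mapping to $A$ with finite kernel (since $\Lambda\cap V$ is the full preimage of $\Lambda$ in $V$, the induced map is injective on the torus), and nondegeneracy of $E|_{\Lambda\cap V}$ provides a polarisation on $M$, so $M$ is an abelian subvariety of $A$ of dimension strictly between $0$ and $g$; hence $A$ is non-simple. One must be slightly careful that the map $M\to A$ is injective: the kernel is $(\Lambda\cap V')/(\Lambda\cap V)$ for $V'$ the smallest complex subspace containing $\Lambda\cap V$, but $\Lambda\cap V$ having rank $2k=2\dim_\CC V$ forces $V'=V$ and the kernel is trivial.

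The main obstacle, such as it is, is the bookkeeping in this last injectivity/rank point and in pinning down exactly what ``symplectic of rank $2k$'' should mean so that nondegeneracy of the restricted form is genuinely equivalent to $H$ restricting to a polarisation — the rest is a direct translation between the subvariety picture and the lattice picture. Everything else follows from results already cited: the correspondence of Theorem~\cite[Thm 5.3.2]{LB} together with Poincaré's theorems \ref{prthm} and \ref{pcrthm}, and the elementary linear algebra of sublattices of $\CC^g$.
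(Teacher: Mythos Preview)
Your argument is correct and matches the paper's approach: lift a subvariety to its universal cover $V\subset\CC^g$ for the forward direction, and for the converse quotient $V$ by $\Lambda\cap V$ to obtain a polarised subtorus, the symplectic condition encoding that $H|_V$ restricts to a polarisation. Your closing remark describing the kernel as $(\Lambda\cap V')/(\Lambda\cap V)$ is both unnecessary and not quite right---injectivity of $V/(\Lambda\cap V)\to A$ is immediate from $\Lambda\cap V$ being by construction the full preimage of $\Lambda$ in $V$, as you had already correctly observed in the preceding parenthetical---but this does not affect the overall validity.
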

\begin{proof}
If $A$ is non-simple, then there exists a subvariety $B$ of dimension, say $k$. Taking a universal cover of $A$, we get a lattice $\Lambda$ and a hermitian form $H$. The preimage of $B$ is a vector subspace, say $V$ and $\Lambda\cap V$ is of rank $2k$. The induced polarisation on $B$ is given by restriction of $H$, so $\Lambda\cap V$ has to be symplectic sublattice. 

If $V$ is a complex subspace then we define $B=(V,\Lambda\cap V)$. By assumption, $H|_V$ is a polarisation, so $B$ is an abelian subvariety, so $A$ is non-simple.
\end{proof}

\subsection{Equations of non-simple abelian varieties}
The idea of constructing equations is to choose a sublattice that is symplectic of type $D=(d_1,\ldots,d_k)$ and, by applying equations, force it to lie in a complex subspace. To fix notation, let $e_i$ denote the basis of $\CC^g$ and $f_i=Z_A(i)$ the column vectors of some Siegel matrix $Z_A$. Then $\ ^te_i\Ima(Z_A)^{-1}f_j=\delta_{i,j}$ gives a standard matrix of the symplectic form.
Choose $g_i=d_ie_i+e_{g+1-i}, i=1,\ldots,k$. Obviously $g_i$'s are primitive, linearly independent and well defined since $k\leq\frac{g}{2}$. Moreover, $\{g_i,f_i,\ i=1,\ldots,k\}$ generates a symplectic sublattice of type $D$.
The following equations will force $f_i$'s to lie in the complex subspace generated by $g_i$'s and by Lemma \ref{lsublattice} will give the desired outcome. From now on, we start to abuse notation by writing 0 for the block matrix of the correct dimension.

\begin{theorem}\label{thmIsgd}
Let $0<k\leq\frac{g}{2}$ and let $D=(d_1,\ldots,d_k)$ be a type of polarisation. 

Let $Z_A=[z_{ij}]\in \SH_g$ satisfy 
\begin{align*}
&z_{ij}=d_iz_{(g+1-i)j},&\quad &i=1,\ldots,k, j=1,\ldots,k&\\ 
&z_{ij}=0,&\quad &i=k+1,\ldots,g-k, j=1,\ldots,k&
\end{align*}

Let $\Lambda_A=\left<Z_A\ \id_g\right>$ and $A=\CC^g/\Lambda_A$.
Then $A\in\Is^g_D$, i.e.
there exists an abelian subvariety given by $$
Z_M=\left[\begin{array}{cccc}
  z_{11}&z_{12}&\dots&z_{1k}\\
  z_{12}&z_{22}&\dots&z_{2k}\\
  \vdots&\vdots&\ddots&\vdots\\
  z_{1k}&z_{2k}&\dots&z_{kk}\\
\end{array}\right],\ \Lambda_M=<Z_M,D>,\ 
M=\CC^k/\Lambda_M,
$$
and an embedding
$$
\iota_M\colon
M\To A
$$
$$ 
(x_1,\ldots,x_k)+\Lambda_M\Mapsto
(x_1,\ldots,x_k,0,\ldots,0,\frac{x_1}{d_1},\ldots,\frac{x_k}{d_k})+\Lambda_A
$$

such that the restricted polarisation is of type $D$.
\end{theorem}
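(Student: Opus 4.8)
The plan is to apply Lemma~\ref{lsublattice}: I need to exhibit a complex subspace $V\subset\CC^g$ of dimension $k$ such that $\Lambda_A\cap V$ is a symplectic sublattice of rank $2k$, and then identify the resulting abelian subvariety with $M$ as stated. Following the setup preceding the theorem, I would take $g_i=d_ie_i+e_{g+1-i}$ for $i=1,\dots,k$ and $f_i=Z_A(i)$ (the $i$-th column of $Z_A$) for $i=1,\dots,k$, and let $V=\mathrm{span}_\CC\{g_1,\dots,g_k\}$. The lattice generated by $\{g_1,\dots,g_k,f_1,\dots,f_k\}$ is by construction a symplectic sublattice of $\Lambda_A$ of type $D$ (here one uses $\,^te_i(\Ima Z_A)^{-1}f_j=\delta_{ij}$ to read off the form). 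So the only real content is to check that each $f_i$, $i=1,\dots,k$, actually lies in $V$; once that is done, $\Lambda_A\cap V\supset\langle g_i,f_i\rangle$ has rank $\geq 2k$, hence exactly $2k$ since $\dim_\CC V=k$, and it is symplectic, so Lemma~\ref{lsublattice} produces the subvariety.

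The heart of the argument is therefore the following linear-algebra computation. The $j$-th entry of $f_i$ (for $j=1,\dots,k$) is $z_{ji}=z_{ij}$, which by the first batch of hypotheses equals $d_j z_{(g+1-j)i}$; this is exactly what a combination $\sum_j c_j g_j$ contributes in slot $j$ if $c_j=z_{(g+1-j)i}$. For $j=k+1,\dots,g-k$ the entry of $f_i$ is $z_{ji}$, which by symmetry is $z_{ij}=0$ by the second batch of hypotheses (note $i\le k$ so the index $i$ is in the allowed range; we need $z_{ij}=0$ for $i\le k$, $k+1\le j\le g-k$, which is the stated relation with the roles of $i,j$ exchanged, legitimate since $Z_A$ is symmetric). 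Finally for $j=g+1-\ell$ with $\ell=1,\dots,k$, the combination $\sum_j c_j g_j$ contributes $c_\ell=z_{(g+1-\ell)i}=z_{ji}$, which matches the corresponding entry of $f_i$. Hence $f_i=\sum_{j=1}^k z_{(g+1-j)i}\,g_j\in V$, as required.

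It remains to match the abstract subvariety with the explicit $(M,\iota_M)$ in the statement. The subvariety is $V/(\Lambda_A\cap V)$; using the $\CC$-basis $g_1,\dots,g_k$ of $V$, the generators $f_1,\dots,f_k$ get coordinates given precisely by the $k\times k$ matrix $[z_{(g+1-j)i}]_{i,j}$, and the generators $g_1,\dots,g_k$ get coordinates $\diag(1,\dots,1)$ scaled appropriately — a short check, using $z_{ij}=d_iz_{(g+1-i)j}$ to rewrite $Z_M=[z_{ij}]_{i,j\le k}$ in terms of the reduced block, shows this presentation is $\langle Z_M\ D\rangle$ after the evident coordinate rescaling $s\mapsto (s_1/d_1,\dots)$. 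Unwinding this change of coordinates back to the standard coordinates $e_1,\dots,e_g$ of $\CC^g$ gives exactly the stated embedding $(x_1,\dots,x_k)\mapsto(x_1,\dots,x_k,0,\dots,0,x_1/d_1,\dots,x_k/d_k)$, since $g_i = d_ie_i+e_{g+1-i}$ means the point $\sum (x_i/d_i)g_i$ has $e_i$-coordinate $x_i$ and $e_{g+1-i}$-coordinate $x_i/d_i$. That the induced polarisation on $M$ is of type $D$ then follows because $\{g_i,f_i\}$ was arranged to be a symplectic basis of $\Lambda_A\cap V$ of type $D$; alternatively one invokes Proposition~\ref{cmplandroots}.

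The main obstacle is purely bookkeeping: keeping the index ranges straight (which relations apply for $i\le k$ versus $k<i\le g-k$, and exploiting symmetry of $Z_A$ to transpose them), and carrying the coordinate change between the basis $\{g_i\}$ of $V$ and the standard basis of $\CC^g$ carefully enough to land on the precise formula for $\iota_M$ and the precise matrix $Z_M$. There is no conceptual difficulty beyond Lemma~\ref{lsublattice} once the containment $f_i\in V$ is verified.
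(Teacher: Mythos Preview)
Your proof is correct and essentially follows the approach the paper itself sets up in the paragraph immediately preceding the theorem: you verify via Lemma~\ref{lsublattice} that the equations force $f_i\in V=\mathrm{span}_\CC\{g_1,\dots,g_k\}$, which is exactly the content of the paper's check that $\iota_M(Z_M(i))=Z_A(i)$. The only difference is in how the polarisation type is read off: the paper computes $\iota_M^*H$ directly via the matrix identity $[\id_k\ 0\ \tfrac{1}{D}](\Ima Z_A)^{-1}{}^t[\id_k\ 0\ \tfrac{1}{D}]\,\Ima Z_M=\id_k$, whereas you argue via the values $E(g_i,f_j)=d_i\delta_{ij}$ of the alternating form on the lattice basis --- both give type $D$ and are equivalent, with your route perhaps more in the spirit of the Lemma~\ref{lsublattice} framework and the paper's more self-contained.
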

\begin{proof}
To shorten notation we will write $Z_M(i)$ for the $i$-th column vector of the matrix $Z_M$ and $z'$ for $\Ima(z)$. By $\frac{1}{D}$ we will denote the matrix $\diag(\frac{1}{d_1},\ldots,\frac{1}{d_k})$.

Obviously $\iota_M$ is a well defined embedding because the images of generators of $\Lambda_M$ are given by
$\iota_M(d_ie_i)=d_ie_i+e_{g+1-i}$ and
$\iota_M(Z_M(i))=Z_A(i)$
and thus they are primitive vectors in $\Lambda_A$.

It remains to compute the restricted polarisation using analytic representations written in block matrices. We have
\begin{align*}
\left[\begin{array}{cccc}
\id_k&0&\frac{1}{D}\end{array}\right]&
(\Ima Z_A)^{-1}
\left[\begin{array}{c}
\id_k\\0\\\frac{1}{D}\end{array}\right]\Ima Z_M
=\\
\left[\begin{array}{cccc}
\id_k&0&\frac{1}{D}\end{array}\right]&
\left[\begin{array}{cccc}
\Lambda_A'(1)&\Lambda_A'(2)&\ldots&\Lambda_A'(g)
 \end{array}\right]^{-1}
\left[\begin{array}{cccc}
\Lambda_A'(1)&\Lambda_A'(2)&\ldots&\Lambda_A'(k)
 \end{array}\right]=\\
\left[\begin{array}{cccc}
\id_k&0&\frac{1}{D}\end{array}\right]&
\left[\begin{array}{c}
\id_k\\0\\0\end{array}\right]=\id_k,
\end{align*}
so the induced polarisation is of type $D$.
\end{proof}
\begin{remark}
Let $\tilde{D}=\diag(1,\ldots,1,d_1,\ldots,d_k)$ be a $(g-k)$-tuple and 
$\tilde{Z}_M=\left[\begin{array}{cc}
  0&0\\
  0&Z_M\\
  \end{array}\right]$
be the zero extension of $Z_M$ to a $(g-k)\times(g-k)$ symmetric matrix.
Let
$$X=\left[\begin{array}{cccc}
  z_{k+1k+1}&z_{k+1k+2}&\dots&z_{k+1g}\\
  z_{k+1k+2}&z_{k+2k+2}&\dots&z_{k+2g}\\
  \vdots&\vdots&\ddots&\vdots\\
  z_{k+1g}&z_{k+2g}&\dots&z_{gg}\\
\end{array}\right].$$
Then
$$Z_N=\tilde{D}X\tilde{D}-\tilde{Z}_M$$ is a symmetric matrix and the complementary abelian subvariety is given by
$$ \Lambda_N=<Z_N,\ \tilde{D}>,\ 
N=\CC^{g-k}/\Lambda_N,$$
and 
$$
\iota_N\colon
N\To A
$$
$$ 
(y_1,\ldots,y_{g-k})+\Lambda_N\Mapsto
(0,\ldots,0,\frac{y_1}{\tilde{d}_1},\ldots,\frac{y_{g-k}}{\tilde{d}_{g-k}})+\Lambda_A
$$
is a well defined embedding.

To shorten computations, we will describe only important steps in block matrices. Then
$$\iota_N(\tilde{D})=\left[\begin{array}{c}
  0\\
  \id_{g-k}\\
  \end{array}\right] \ \text{ and }\ \iota_N(Z_N)=\left[\begin{array}{c}
  0\\
  X\tilde{D}-\tilde{D}^{-1}\tilde{Z}_M\\
  \end{array}\right]$$
is the $g\times (g-k)$ block matrix of the last $(g-k)$ columns of $Z_A$ with the last $k$ columns of $Z_A$ multiplied by $D$ from the right and having subtracted the first $k$ columns. Therefore, images of generators of $\Lambda_N$ are primitive so $\iota_N$ is an embedding. Checking that the restricted polarisation is of type $\tilde{D}$ is completely analogous.
\end{remark}

Using Propositions~\ref{cmplandroots} and~\ref{irrofis}, we can summarise the discussion into the following theorem.
\begin{theorem}\label{Is1p}
Let $(A,H)$ be a principally polarised abelian variety and suppose 
$D$ and $\tilde{D}$ are possible polarisation types of complementary abelian subvarieties.
The following conditions are equivalent:
\begin{enumerate}
\item there exists an abelian subvariety $M\subset A$ such that
  $H|_M$ is of type $D$, i.e.\ $A\in\Is^g_{D}$;
\item there exists a pair $(M,N)$ of complementary abelian subvarieties in
  $A$ of types $D$ and $\tilde{D}$;

\item $(A,H)$ is isomorphic to an abelian variety defined by the lattice
$\Lambda_A=\left<Z_A\ \id_g\right>$,
with $Z_A=[z_{ij}]\in \SH_g$
satisfying the linear equations
$$\begin{cases}
z_{ij}=d_iz_{(g+1-i)j},\quad i=1,\ldots,k,\quad\quad\quad\quad j=1,\ldots,k\\ 
z_{ij}=0,\quad \quad\quad\quad\quad i=k+1,\ldots,g-k,\ \ j=1,\ldots,k
\end{cases}$$

\end{enumerate}
\end{theorem}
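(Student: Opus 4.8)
I would verify the cycle $(2)\Rightarrow(1)$, $(3)\Rightarrow(2)$, $(1)\Rightarrow(3)$, since the first two are essentially in hand. The implication $(2)\Rightarrow(1)$ is immediate, and in fact $(1)\Leftrightarrow(2)$ is precisely the equivalence of conditions (1) and (3) of Proposition~\ref{cmplandroots}. For $(3)\Rightarrow(2)$: if $Z_A\in\SH_g$ satisfies the two displayed systems, then Theorem~\ref{thmIsgd} produces an abelian subvariety $M\subset A$ with $H|_M$ of type $D$, and the remark following Theorem~\ref{thmIsgd} produces the complementary subvariety $N\subset A$ with $H|_N$ of type $\tilde{D}$, so $(M,N)$ is the required pair. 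Thus everything reduces to $(1)\Rightarrow(3)$: every $(A,H)\in\Is^g_D$ must admit a period matrix on the linear locus.

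For $(1)\Rightarrow(3)$ the plan is a dimension count combined with the irreducibility already proved. Let $L\subset\SH_g$ be the linear subspace cut out by $z_{ij}=d_iz_{(g+1-i)j}$ $(1\le i,j\le k)$ and $z_{ij}=0$ $(k<i\le g-k,\ 1\le j\le k)$, and let $\pi\colon\SH_g\to\cA_g$ be the quotient map. A short bookkeeping argument shows the two systems involve pairwise disjoint sets of (symmetrised) coordinates and are each linearly independent: the second contributes $k(g-2k)$ independent equations, and in the first the "private" variable $z_{(g+1-i)j}$ of the $(i,j)$-th equation occurs in no other, giving $k^2$ more, so $L$ has codimension $k(g-2k)+k^2=k(g-k)$ in $\SH_g$, i.e.\ $\dim L=\binom{k+1}{2}+\binom{g-k+1}{2}$. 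Since $L$ is a connected complex submanifold it is irreducible, and since $\pi$ has discrete fibres $\pi(L)$ is irreducible of the same dimension. By Theorem~\ref{thmIsgd} we have $\pi(L)\subseteq\Is^g_D$, while by Proposition~\ref{irrofis} the locus $\Is^g_D$ is irreducible of exactly this dimension; hence $\pi(L)=\Is^g_D$. In particular every $(A,H)\in\Is^g_D$ equals $\CC^g/\langle Z_A\ \id_g\rangle$ for some $Z_A\in L$, which is (3).

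I would at least mention the more constructive route to $(1)\Rightarrow(3)$. Starting from $M\subset A$ with $H|_M$ of type $D$, Lemma~\ref{lsublattice} gives a $k$-dimensional complex subspace $V$ with $\Lambda_A\cap V$ symplectic of type $D$; a normal form for a symplectic sublattice of a unimodular symplectic $\ZZ$-lattice (extractable also from Proposition~\ref{cmplandroots}(4) with Proposition~\ref{maxisosub2}) yields a symplectic basis $\beta_1,\dots,\beta_{2g}$ of $\Lambda_A$ with $\Lambda_A\cap V=\langle\beta_1,\dots,\beta_k,\ d_i\beta_{g+i}+\beta_{2g+1-i}\ (1\le i\le k)\rangle$. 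Using $\beta_{g+1},\dots,\beta_{2g}$ as the coordinate vectors $e_1,\dots,e_g$ of $\CC^g$ presents $A$ as $\CC^g/\langle Z_A\ \id_g\rangle$ (positivity of the polarisation form forces $Z_A\in\SH_g$); since then $V=\mathrm{span}_{\CC}\{d_ie_i+e_{g+1-i}:i\le k\}$ and $\beta_1,\dots,\beta_k\in V$, expanding the first $k$ columns of $Z_A$ in this basis of $V$ reads off exactly $z_{ij}=d_iz_{(g+1-i)j}$ and $z_{ij}=0$, i.e.\ $Z_A\in L$. The main obstacle in the preferred (first) approach is the bookkeeping that $L$ really has dimension $\binom{k+1}{2}+\binom{g-k+1}{2}$, together with the points that $\pi(L)$ is a genuine closed subvariety (the standard fact that a linear Humbert-type section of $\SH_g$ descends to a closed subvariety of $\cA_g$) and that $\Is^g_D$ is closed of the stated dimension, which is Proposition~\ref{irrofis}; in the constructive approach the only nontrivial ingredient is the symplectic-lattice normal form, after which the claim is a routine identification of coefficients.
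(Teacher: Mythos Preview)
Your proof is correct and follows essentially the same route as the paper: $(1)\Leftrightarrow(2)$ via Proposition~\ref{cmplandroots}, $(3)\Rightarrow(1)$ via Theorem~\ref{thmIsgd}, and $(1)\Rightarrow(3)$ by the irreducibility/dimension argument using Proposition~\ref{irrofis}. The paper's proof is terser (it simply asserts that both loci are closed irreducible of codimension $k(g-k)$), whereas you spell out the dimension count for $L$ and flag the closedness of $\pi(L)$; your added constructive sketch via Lemma~\ref{lsublattice} is not in the paper's proof but is a reasonable alternative.
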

\begin{proof}
$(1)\Leftrightarrow (2)$ is the content of Proposition~\ref{cmplandroots},
so we only need to prove that $(1)\Leftrightarrow (3)$.  Theorem~\ref{thmIsgd}
tells us that the set of abelian varieties with a period matrix
satisfying~(3) is a subset of $\Is^g_D$. Moreover, both of them
are closed irreducible subvarieties of $\cA_g$ of codimension~$k(g-k)$,
which means that they are equal. In other words, the locus
\begin{align*}
\{Z=[z_{ij}]\in\SH_g\colon
&z_{ij}=d_iz_{(g+1-i)j},&\quad &i=1,\ldots,k,\ j=1,\ldots,k&\\ 
&z_{ij}=0,&\quad &i=k+1,\ldots,g-k,\ j=1,\ldots,k\}&
\end{align*} 
is one of the irreducible components of the preimage of $\Is^g_D$ in
$\SH_g$.
\end{proof}

\end{document}